\theoremstyle{plain}
\newtheorem{theorem}{Theorem}[section]
\newtheorem{proposition}{Proposition}[section]
\newtheorem{lemma}{Lemma}[section]
\newtheorem{remark}{Remark}[section]
\newtheorem{definition}{Definition}[section]
\newcommand{\eref}[1]{\eqref{#1}}
\newcommand{\sref}[1]{Section~\ref{#1}}
\newcommand{\tref}[1]{Table~\ref{#1}}
\newcommand{\fref}[1]{Figure~\ref{#1}}
\newcommand{\cref}[1]{Chapter~\ref{#1}}
\newcommand{\lemmaref}[1]{Lemma~\ref{#1}}
\newcommand{\propref}[1]{Proposition~\ref{#1}}
\newcommand{\thmref}[1]{Theorem~\ref{#1}}
\def\sG{\mathscr{G}}
\def\sH{\mathscr{H}}
\def\sL{\mathscr{L}}
\def\sS{\mathscr{S}}
\def\bbC{\mathbb{C}}
\def\bbE{\mathbb{E}}
\def\bbI{\mathbb{I}}
\def\bbN{\mathbb{N}}
\def\bbP{\mathbb{P}}
\def\bbR{\mathbb{R}}
\def\cA{{\mathcal A}}
\def\cC{{\mathcal C}}
\def\cD{{\mathcal D}}
\def\cE{{\mathcal E}}
\def\cN{{\mathcal N}}
\def\cP{{\mathcal P}}
\def\cQ{{\mathcal Q}}
\def\cS{{\mathcal S}}
\def\cY{{\mathcal Y}}
\def\ga{\alpha}
\def\gb{\beta}
\def\gd{\delta}
\def\gl{\lambda}
\def\go{\omega}
\def\gq{\theta}
\def\gr{\rho}
\def\gs{\sigma}
\def\gx{\xi}
\def\gD{\Delta}
\def\gL{\Lambda}
\def\bfmath#1{\boldsymbol{#1}}
\def\bfa{{\bfmath{a}}}
\def\bfb{{\bfmath{b}}}
\def\bfc{{\bfmath{c}}}
\def\bfd{{\bfmath{d}}}
\def\bfe{{\bfmath{e}}}
\def\bfl{{\bfmath{l}}}
\def\bfm{{\bfmath{m}}}
\def\bfn{{\bfmath{n}}}
\def\bfr{{\bfmath{r}}}
\def\bfs{{\bfmath{s}}}
\def\bfv{{\bfmath{v}}}
\def\bfw{{\bfmath{w}}}
\def\bfx{{\bfmath{x}}}
\def\bfy{{\bfmath{y}}}
\def\bfz{{\bfmath{z}}}
\def\bfD{\bfmath{D}}
\def\bfG{\bfmath{G}}
\def\bfI{\bfmath{I}}
\def\bfM{\bfmath{M}}
\def\bfP{\bfmath{P}}
\def\bfR{\bfmath{R}}
\def\bfU{\bfmath{U}}
\def\bfW{\bfmath{W}}
\def\bfX{\bfmath{X}}
\def\bfY{\bfmath{Y}}
\def\bfZ{\bfmath{Z}}
\def\ds{\displaystyle}
\def\A{A}
\def\B{B}
\def\G{G}
\def\bfXN{\bfX^{(N)}}
\def\bfYN{\bfY^{(N)}}
\def\bfDN{\bfD^{(N)}}
\def\bfmu{\bfmath{\mu}}
\def\bfnu{\bfmath{\nu}}
\def\bfgs{\bfmath{\gs}}
\def\bfxi{\bfmath{\gx}}
\def\bfzero{\bfmath{0}}
\def\eApprox{\mathring{e}}
\def\eG#1{\mathring{e}_{\text{\rm\tiny Gaussian}}^{(#1)}}
\def\eT#1{\mathring{e}_{\text{\rm\tiny True}}^{(#1)}}
\def\partition#1{\cP_{#1}}
\def\partitiontwo#1{\cQ_{#1}}
\def\haplist#1{\bfl^{(#1)}}
\def\haplistA#1{\bfl^{(#1)_\A}}
\def\haplistB#1{\bfl^{(#1)_\B}}
\def\phammer#1#2{(#1)_{#2\uparrow}}
\def\si{\sum_{i=1}^K}
\def\sk{\sum_{k=1}^K}
\def\sj{\sum_{j=1}^L}
\def\sl{\sum_{l=1}^L}
\def\tA{\theta_\A}
\def\tB{\theta_\B}
\def\ut{\uptau}
\def\Ma{\bfR} % Martingale term
\def\i{{\sc I}}
\def\ii{{\sc II}}
\def\iii{{\sc III}}
\def\iv{{\sc IV}}
\def\v{{\sc V}}
\def\vi{{\sc VI}}
\def\vii{{\sc VII}}
\def\cov{\bbC\textrm{ov}}
\def\dd#1{\text{\rm d}#1} % Differential
\newcommand{\specialcell}[1]{\ifmeasuring@#1\else\omit$\displaystyle#1$\ignorespaces\fi} % allow hfill in align environment
\begin{document}
\begin{frontmatter}

\title{Tractable diffusion and coalescent processes for weakly correlated loci}
\runtitle{Evolution of weakly correlated loci} 

\author{\fnms{Paul A.} \snm{Jenkins}\ead[label=e1]{p.jenkins@warwick.ac.uk}\thanksref{t1}}
\thankstext{t1}{Supported in part by EPSRC Research Grant EP/L018497/1 and an NIH Grant R01-GM094402.}
 \address{Department of Statistics\\ University of Warwick\\ Coventry CV4 7AL\\ UK\\ \printead{e1}}
\affiliation{University of Warwick}
\and
\author{\fnms{Paul} \snm{Fearnhead}\ead[label=e2]{p.fearnhead@lancaster.ac.uk}}
 \address{Department of Mathematics and Statistics\\ Lancaster University\\ Lancaster
LA1 4YF\\ UK\\ \printead{e2}}
\affiliation{Lancaster University}
\and
\author{\fnms{Yun S.} \snm{Song}\corref{}\ead[label=e3]{yss@stat.berkeley.edu}\thanksref{t2}}
\thankstext{t2}{Supported in part by an NIH Grant R01-GM094402, and a Packard Fellowship for Science and Engineering.}
\address{Department of Statistics and\\ ~Computer Science Division\\ University of California, Berkeley\\ Berkeley, CA 94720\\ USA\\\printead{e3}}
\affiliation{University of California, Berkeley}

\runauthor{P. A. Jenkins, P. Fearnhead \& Y. S. Song}

\begin{abstract}
Widely used models in genetics include the Wright-Fisher diffusion and its moment dual, Kingman's coalescent. Each has a multilocus extension but under neither extension is the sampling distribution available in closed-form, and their computation is extremely difficult. In this paper we \emph{derive} two new multilocus population genetic models, one a diffusion and the other a coalescent process, which are much simpler than the standard models, but which capture their key properties for large recombination rates. The diffusion model is based on a central limit theorem for density dependent population processes, and we show that the sampling distribution is a linear combination of moments of Gaussian distributions and hence available in closed-form. The coalescent process is based on a probabilistic coupling of the ancestral recombination graph to a simpler genealogical process which exposes the leading dynamics of the former. We further demonstrate that when we consider the sampling distribution as an asymptotic expansion in inverse powers of the recombination parameter, the sampling distributions of the new models agree with the standard ones up to the first two orders.
\end{abstract}

\begin{keyword}[class=MSC]
\kwd[Primary ]{92D15}
% \kwd{}
\kwd[; secondary ]{65C50,92D10}
\end{keyword}

\begin{keyword}
\kwd{population genetics}
\kwd{recombination}
\kwd{sampling distribution}
\kwd{diffusion}
\kwd{coupling}
\end{keyword}

\end{frontmatter}

\section{Introduction}
\label{sec:intro}
The basis of many important problems in genetics is to find an expression for a sampling distribution or likelihood. Valuable tools in this endeavour are stochastic models of allele frequency evolution forwards in time, and their dual genealogical processes backwards in time. In particular, the numerous variants of the Wright-Fisher diffusion and Kingman's coalescent, respectively, have focused attention on the scaling limit as the population size goes to infinity, leading from a (complicated) finite-population model of reproduction to a (simpler) infinite-population limit. At a single genetic locus, the problem of computing sampling distributions in these models is well studied, with even some closed-form formulas available \citep{wri:1949, ewe:1972,jen:son:2011,bha:etal:2012}. However, with ongoing technological developments in high-throughput DNA sequencing, large genomic datasets are becoming available and it is necessary to consider multilocus models. Inter-locus recombination quickly makes such models intractable; for neither the Wright-Fisher diffusion with recombination nor the coalescent with recombination---or \emph{ancestral recombination graph} (ARG)---%\citep{gri:mar:1997},
is it possible to obtain a closed-form expression for the sampling distribution. This has remained a notoriously difficult problem, and to make progress using these models it has usually been necessary to resort to computationally-intensive techniques such as importance sampling \citep{gri:mar:1996, fea:don:2001, gri:etal:2008, jen:gri:2011}, Markov chain Monte Carlo \citep{kuh:etal:2000, nie:2000, wan:ran:2008, ras:etal:2014}, or other numerical approximations % finite difference approximations
\citep{boi:loi:2007, miu:2011}. Denoting the population-scaled recombination parameter by $\gr$, only in the special cases of $\gr = 0$ or $\gr = \infty$ is it possible to make progress analytically, since then we are back to a single locus, or to many independent single loci, respectively.

In another direction, we %Jenkins \& Song \citep{jen:son:2010:AAP}
have considered an analytic approach to the problem, as follows. Denote the observed sample configuration at two loci by $\bfn$ and its sampling probability by $q(\bfn; \gr)$ (to be defined precisely below). Consider the asymptotic expansion in inverse powers of $\gr$:
\begin{equation}
\label{eq:main}
q(\bfn; \gr) = q_0(\bfn) + \frac{q_1(\bfn)}{\gr} + \frac{q_2(\bfn)}{\gr^2} + \cdots,
\end{equation}
where for convenience we suppress the dependence of these terms on other parameters of the model. Under an infinite-alleles type of mutation, we %Jenkins \& Song
obtained closed-form formulas for $q_0(\bfn)$ and $q_1(\bfn)$ in terms of the marginal \emph{one}-locus sampling probabilities, and a decomposition of $q_2(\bfn)$ into a closed-form term plus a second part which is evaluated easily by dynamic programming \citep{jen:son:2010:AAP}. (The result is stated more precisely in \thmref{thm:jen:son:2009} below.) This provides the first closed-form extension of Ewens' Sampling Formula \citep{ewe:1972} to handle finite amounts of recombination. It has been extended subsequently to include more general models of mutation \citep{jen:son:2009:G}, natural selection \citep{jen:son:2012:AAP}, higher-order terms \citep{jen:son:2012:AAP}, and more than two loci \citep{bha:son:2012}, and has had practical implications for genomic inference \citep{cha:etal:2012}. One particularly appealing conclusion of these works is that both $q_0(\bfn)$ and $q_1(\bfn)$ are \emph{universal}; that is, their functional form is invariant to our assumptions about mutation and selection acting marginally at each locus. The effects of these marginal processes are entirely subsumed into the relevant one-locus sampling distributions. %[Such universality does not hold for $q_2(\bfn$) \citep{jen:son:2012:AAP}.]%, nor does it hold if selection acts epistatically, depending on the joint sample configuration across more than one locus.)

The simple and universal forms for $q_0(\bfn)$ and $q_1(\bfn)$ provide strong circumstantial evidence that there exists an underlying stochastic process which is much simpler than the standard models for finite amounts of recombination. In particular, we previously conjectured \citep{jen:son:2010:AAP} the existence of a process which is both much simpler than the standard models based on the Wright-Fisher diffusion or on the ARG, and is in agreement with the sampling distribution \eref{eq:main} up to $O(\gr^{-2})$.  The goal of this paper is to describe such a process. In fact, using different arguments we describe two such processes, obtaining both a limiting diffusion and a coalescent process with these properties. In the diffusion approximation, the key idea is to suppose that the probability $r$ of a recombination per individual per generation scales as $N^{-\gb}$ as the population size $N\to\infty$, for $0 < \gb < 1$, rather than the usual choice of $\gb = 1$. Interest in asymptotically large recombination rates is reasonable because of extensive recombination rate heterogeneity along chromosomes in e.g.\ humans, strong recombination rates in some species such as \emph{Drosophila melanogaster} \citep{cha:etal:2012}, and because of the need to understand the long-range dependencies between well-separated loci. Our diffusion in this scaling is intimately related to the central limit theorem for density dependent population processes \citep[see][Theorem 11.2.3]{eth:kur:1986}, which has been analyzed in genetics---for models of strong mutation rather than strong recombination---by \citet{fel:1951} and \citet{nor:1975:SIAM}. %See also \cite{miu:2011} for a related small-noise approximation of the two-locus Wright-Fisher diffusion).
A closely related scaling in the context of $\Xi$-coalescent processes was also recently explored by \citet{bir:etal:2013} (in that paper $\gb = 1$ but with timescale $N^2$). The coalescent approach, meanwhile, uses a coupling argument. Intuitively, we would like to couple the ARG to the limiting case of two independent coalescent trees ($\gr = \infty$). To account for contributions to the sampling distribution of $O(\gr^{-1})$, we must quantify the ``leading order reasons'' for such a coupling to fail. When $\gr$ is large but finite, lineages in the ARG ancestral to both loci undergo recombination backwards in time very rapidly, until the first time $U$ that no such lineage survives. In this paper we show that, roughly speaking, in order to recover the sampling distribution up to $O(\gr^{-1})$ we need consider only the following type of exceptional event: \emph{a coalescence occurs more recently than time $U$ in the ARG, and the coalescence is between two lineages each of which is ancestral to both of the two loci}. This observation enables us to \emph{define} a simple coalescent process which allows for at most one of these events but is otherwise very similar to the easy limiting process corresponding to $\gr = \infty$.

The paper is organized as follows. In \sref{sec:notation} we specify our notation and summarize previous research. Novel diffusion and coalescent processes are introduced in Sections \ref{sec:diffusion} and \ref{sec:coalescent}, respectively, and we conclude in \sref{sec:discussion} with a brief discussion.

\section{Notation and previous results}
\label{sec:notation}
%We use the following notation.
For $M \in \bbN = \{0,1,2,\ldots\}$, let $[M] := \{1,2,\ldots, M\}$. The complement of a set $J$ is written $J^\complement$. Denote the Kronecker delta by $\gd_{ij}$ which takes the value 1 if $i = j$ and 0 otherwise. Let $\bfe_i$ denote a unit vector whose $j$th entry is $\gd_{ij}$, and let $\bfe_{ij}$ denote a matrix with $(k,l)$th entry equal to $\gd_{ik}\gd_{jl}$. %Let $\binom{n}{\bfn}$ denote the multinomial coefficient for a vector or matrix $\bfn$ whose entries are all nonnegative and sum to $n$; for a matrix this is defined as $n!/\prod_{i,j} n_{ij}!$.
For a vector $\bfv \in \bbR^d$ we denote by $|\bfv|$ the usual Euclidean norm. %We also extend the Euclidean norm to a matrix by the componentwise definition $|\bfn| = \sqrt{\sum_{i,j} n_{ij}^2}$.
Denote the $k \times l$ zero matrix by $\bfzero_{k\times l}$ and the $k \times k$ identity matrix by $\bfI_k$. We will replace a subscript with a ``$\cdot$'' to denote summation over that index. A prime symbol $'$ will denote vector or matrix transpose. For $z\in \bbR_{\geq 0}$ and $n \in \bbN$, $\phammer{z}{n} := z(z+1)\cdots(z+n-1)$ denotes the $n$th ascending factorial of $z$. Finally, for a matrix $\bfR$ of processes we let $[\bfR]_t = ([R_i,R_j]_t)_{i,j}$ denote the matrix of corresponding covariation processes. %Finally, for a random element $Z$ we write $\cL(Z)$ to denote its law.

Consider the usual diffusion limit of an exchangeable model of random mating with constant population size of $N$ haplotypes. %We assume for now that the population is selectively neutral.
Our interest will be in a sample from this population at two loci, which we call A and B, with the probability of mutation per haplotype per generation denoted by $u_\A$ and $u_\B$ respectively. In the diffusion limit we let $N\to\infty$ and $u_\A$, $u_\B \to 0$ while the population-scaled parameters $\gq_\A = 2Nu_\A$ and $\gq_\B = 2Nu_\B$ remain fixed. In this paper we will suppose a \emph{finite-alleles} model of mutation such that a mutation to an allele $i$ in type space $E_\A = [K]$, $K \in \bbN$, takes it to allele $k \in [K]$ with probability $P_{ik}^\A$, with $E_\B = [L]$ and $P_{jl}^\B$, $j,l \in [L]$ defined analogously. (As we discover below, the mutation model is not important and we could pose something more complicated with little extra effort.) %For example, letting $E_\A$, $E_\B$ be uncountably infinite would result in a probability measure-valued diffusion limit, but $q_0(\bfn)$ and $q_1(\bfn)$ would still be universal.)
The probability of a recombination between the two loci per haplotype per generation is denoted by $r$, and we assume that $\gr_\gb = 2N^\gb r$ is fixed as $N \to \infty$, for some fixed $\gb \in (0,1]$. Previous work has focused on the case $\gb = 1$ with time measured in units of $N$ generations. For consistency with the usual notation we write $\gr = \gr_1$.

A sample from this model comprises $a$ haplotypes observed only at locus A, $b$ haplotypes observed only at locus B, and $c$ haplotypes observed at both loci. The sample configuration is denoted by $\bfn = (\bfa,\bfb,\bfc)$ where $\bfa = (a_i)_{i\in [K]}$ and $a_i$ is the number of haplotypes observed to exhibit allele $i$ at locus A; $\bfb = (b_j)_{j\in [L]}$ where $b_j$ is the number of haplotypes observed to exhibit allele $j$ at locus B; and $\bfc = (c_{ij})_{i\in [K], j\in [L]}$ where $c_{ij}$ is the number of haplotypes with allele $i$ at locus A and allele $j$ at locus B. Thus,
\[
\begin{array}{ccc}
a = \ds\si a_i, & b = \ds\sj b_j, & c = \ds\si\sj c_{ij},
\end{array}
\]
and we let $n = a + b + c$. We further write $\bfc_\A = (c_{i\cdot})_{i\in[K]}$ and $\bfc_\B = (c_{\cdot j})_{j\in[L]}$ to denote the marginal sample configurations of $\bfc$ restricted to locus A and locus B respectively. Finally, we use $q(\bfa,\bfb,\bfc)$ to denote the probability that when we sample $n$ haplotypes in some order from the population at stationarity we obtain the unordered configuration $(\bfa,\bfb,\bfc)$; by sampling exchangeability this is indeed a function only of the unordered configuration $(\bfa,\bfb,\bfc)$. For convenience we suppress the dependence of this quantity on the model parameters and on $\beta$. The main result motivating this work is an expansion for $q(\bfa,\bfb,\bfc)$ for the case of $\beta=1$, and later we will show that this expansion holds for all $\beta \in (0,1]$.

\begin{theorem}[See \citet{jen:son:2009:G}]
\label{thm:jen:son:2009}
Consider the following asymptotic expansion for $q(\bfa,\bfb,\bfc)$ under the diffusion limit with $\gb = 1$:
\[
q(\bfa,\bfb,\bfc) = q_0(\bfa,\bfb,\bfc) + \frac{q_1(\bfa,\bfb,\bfc)}{\gr} + O\left(\frac{1}{\gr^2}\right), \quad \text{as }\gr\to\infty,
\]
with $q_0$, $q_1$, $\ldots$ independent of $\gr$. Then the zeroth order term is given by
\begin{equation}
	q_0(\bfa,\bfb,\bfc) = q^\A(\bfa + \bfc_\A)q^\B(\bfb + \bfc_\B),
\label{eq:zerothorder}
\end{equation}
and the first order term is given by
	\begin{align}
		\label{eq:firstorder}
		q_1(\bfa,\bfb,\bfc) = {}& \binom{c}{2}q^\A(\bfa+\bfc_\A)q^\B(\bfb+\bfc_\B) \nonumber\\
	 	{}& -q^\B(\bfb+\bfc_\B)\sum_{i=1}^K \binom{c_{i\cdot}}{2}q^\A(\bfa+\bfc_\A-\bfe_i) \nonumber\\
	 	{}& -q^\A(\bfa+\bfc_\A)\sum_{j=1}^L \binom{c_{\cdot j}}{2}q^\B(\bfb+\bfc_\B-\bfe_j) \nonumber\\
	 	{}& +\sum_{i=1}^K \sum_{j=1}^L \binom{c_{ij}}{2}q^\A(\bfa+\bfc_\A-\bfe_i)q^\B(\bfb+\bfc_\B-\bfe_j),
	\end{align}
where $q^\A$, $q^\B$ are the marginal sampling distributions at locus A and locus B, respectively.
\end{theorem}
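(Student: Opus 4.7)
The plan is to derive both \eref{eq:zerothorder} and \eref{eq:firstorder} from the two-locus sampling recursion satisfied by $q(\bfa,\bfb,\bfc)$ under the ARG (equivalently, from the generator equation of the Wright--Fisher diffusion with recombination applied to test functions). Schematically, this recursion takes the form
\begin{equation*}
\bigl[n(n-1) + \gq_\A(a+c) + \gq_\B(b+c) + \gr c\bigr] q(\bfa,\bfb,\bfc)
= \cC q(\bfa,\bfb,\bfc) + \cM q(\bfa,\bfb,\bfc) + \gr\, \cR q(\bfa,\bfb,\bfc),
\end{equation*}
where $\cC$ and $\cM$ encode one-step coalescence and mutation transitions among the three classes $\bfa,\bfb,\bfc$, and the recombination operator acts as $\cR q(\bfa,\bfb,\bfc) = \sum_{i,j} c_{ij}\, q(\bfa+\bfe_i,\bfb+\bfe_j,\bfc-\bfe_{ij})$. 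First I would substitute the ansatz $q = q_0 + q_1/\gr + O(\gr^{-2})$ into this recursion and match powers of $\gr$.

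At order $\gr^1$, the equation reduces to $c\, q_0(\bfa,\bfb,\bfc) = \cR q_0(\bfa,\bfb,\bfc)$, i.e.\ $q_0$ is annihilated by the operator that breaks a ``both-loci'' haplotype into two ``one-locus'' haplotypes preserving the two marginal configurations $\bfa+\bfc_\A$ and $\bfb+\bfc_\B$. Any function of these two marginals alone solves this. To pin down the actual solution I would appeal to the obvious boundary case $c=0$, where $q_0(\bfa,\bfb,\bfzero)$ equals the two-sample product of one-locus distributions at stationarity (loci are independent when no haplotype is jointly typed and $\gr = \infty$), together with a standard consistency (partial exchangeability) argument lifting this to all $\bfc$. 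This gives \eref{eq:zerothorder}.

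At order $\gr^0$, after cancelling all recombination-leading terms using the $q_0$ identity, the recursion reduces to
\begin{equation*}
\bigl[n(n-1) + \gq_\A(a+c) + \gq_\B(b+c)\bigr] q_0 + c\, q_1 - \cR q_1
= \cC q_0 + \cM q_0.
\end{equation*}
Plugging in $q_0 = q^\A(\bfa+\bfc_\A)\, q^\B(\bfb+\bfc_\B)$ and simplifying the right-hand side via the \emph{one-locus} recursions satisfied by $q^\A$ and $q^\B$ (each of the form $[n_\ast(n_\ast-1)+\gq_\ast n_\ast]q^\ast = \cC_\ast q^\ast + \cM_\ast q^\ast$), the mutation and single-locus coalescence contributions collapse into $q_0$ times a scalar, and what remains on the right-hand side are precisely the coalescence events that involve at least one of the $c$ jointly-typed haplotypes. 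These produce four combinatorial groups: two lineages both in $\bfc$ coalescing, two lineages in $\bfc$ and $\bfa$ coalescing (mirrored for $\bfb$), and two lineages in $\bfc$ that coalesce into the same marginal at one locus. The claim is that the ansatz in \eref{eq:firstorder} reproduces all four groups once one applies, term by term, the one-locus recursions for $q^\A$ and $q^\B$.

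The main obstacle is the bookkeeping in this last step: verifying that the four combinatorial terms in \eref{eq:firstorder}, after being acted on by $c - \cR$ and after repeatedly invoking the one-locus recursions, reproduce exactly the residual coalescence contributions of $\cC q_0$ (which include cross-terms like a $\bfc$ lineage merging with an $\bfa$ lineage). I would organize this by first isolating the action of $c-\cR$ on each of the four terms separately (each is a sum over $(i,j)$ or $(i)$ or $(j)$ of $\binom{c_{\cdot\cdot}}{2}$-type coefficients multiplied by products of one-locus probabilities), then comparing the result with the expansion of $\cC q_0$ pair-by-pair according to which class each of the two coalescing lineages belongs to. The universality of the form (independence from the mutation kernels $P^\A, P^\B$) will follow automatically, since the mutation operators $\cM$ act only through $q^\A, q^\B$ and never mix the two loci.
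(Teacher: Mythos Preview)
Your approach is correct and is, in fact, the original route of \citet{jen:son:2009:G,jen:son:2010:AAP}: insert the expansion into the Golding--Ethier--Griffiths recursion and match powers of $\gr$. The present paper does not reproduce that argument; it instead supplies two genuinely different, \emph{process-level} proofs, and it is worth seeing what each buys relative to yours.

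The first (\thmref{thm:gaussiandiffusion}) builds a Gaussian diffusion approximation under the scaling $r = O(N^{-\gb})$, $0<\gb<1$, computes its sampling distribution \emph{exactly} as a finite sum over mixed moments of a multivariate normal via Isserlis' theorem, and then simply reads off the $\gl=0$ and $\gl=1$ terms of that sum to recover \eref{eq:zerothorder} and \eref{eq:firstorder}. The second (\thmref{thm:coupling}) couples the ARG $\cC^{(\gr)}$ to an auxiliary process $\cD^{(\infty)}$ whose marginal trees are those of two independent one-locus coalescents, shows the coupling fails with probability $\binom{c}{2}/\gr + O(\gr^{-2})$ in each of three modes $I^{(1)},I^{(2)},I^{(3)}$, and decomposes $q$ accordingly. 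In that decomposition the four terms of \eref{eq:firstorder} acquire direct genealogical meanings: $\binom{c}{2}q_0$ is the total failure weight; the $\binom{c_{ij}}{2}$ term is the contribution of $I^{(1)}$ (two full fragments coalesce before all recombine); the $\binom{c_{i\cdot}}{2}$ and $\binom{c_{\cdot j}}{2}$ terms are the contributions of $I^{(2)}$ and $I^{(3)}$ (marginal coalescences in $\cD^{(\infty)}$).

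Your recursion argument is more elementary and needs no new stochastic construction, but it is essentially a verification: the ``bookkeeping'' you rightly flag is heavy, and the four-term structure of \eref{eq:firstorder} emerges only after substantial cancellation between $\cC q_0$ and $(c-\cR)q_1$. The paper's proofs instead \emph{explain} that structure a priori, and the universality in the mutation model is immediate there (mutation plays no role before time $U^{(k)}_{a,b,c}$ in the coupling proof, and enters only through $\pi_\A,\pi_\B$ in the diffusion proof). One small caution on your outline: the boundary identification $q_0(\bfa,\bfb,\bfzero)=q^\A(\bfa)q^\B(\bfb)$ is not quite ``obvious'' from $c=0$ alone, since the recursion at $c=0$ still feeds into configurations with $c=1$ via type~\vi{} coalescences; you need either the $\gr\to\infty$ limiting-independence argument you allude to, or to close the order-$\gr^0$ recursion at $c=0$ using the marginal-only property of $q_0$ and then verify the product form solves it.
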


\begin{remark} \label{rem:jen:son:2009}
Under a neutral, finite-alleles model of mutation, if mutation is \emph{parent independent}---that is, $P_{ki}^\A = P_{i}^\A$, $i,k \in [K]$, and $P_{lj}^\B = P_{j}^\B$, $j,l \in [L]$, then $q^\A(\bfa)$ and $q^\B(\bfb)$ are known in closed-form:
\[
\begin{array}{lcr}
	q^\A(\bfa) = \ds\frac{1}{\phammer{\theta_\A}{a}}\prod_{i=1}^K \phammer{\theta_\A P^\A_i}{a_i}, & \text{ and } &
	q^\B(\bfb) = \ds\frac{1}{\phammer{\theta_\B}{b}}\prod_{j=1}^L \phammer{\theta_\B P^\B_j}{b_j}.
\end{array}
\]
These expressions follow, for example, from the moments of the Wright-Fisher diffusion with parent-independent mutation, whose stationary distribution at locus A is $\text{Dirichlet}(\tA P_1^\A,\ldots, \tA P_{K-1}^\A)$ \citep{wri:1949}, and similarly at locus B.
\end{remark}
\begin{remark}
The zeroth-order decomposition is well known \citep[e.g.][]{eth:1979:JAP} and also intuitive, since the two loci become independent as $\gr\to\infty$.
%\item As we mentioned above, the expressions for $q_0$ and $q_1$ do not depend on the assumed model of mutation; in this sense they are \emph{universal} \citep{jen:son:2009:G}. The same formula holds for example under an infinite-alleles model of mutation \citep{jen:son:2010:AAP}.
\end{remark}
\thmref{thm:jen:son:2009} can be obtained by diffusion \citep{jen:son:2012:AAP} or by coalescent \citep{jen:son:2009:G, jen:son:2010:AAP} arguments. In this paper we address both approaches in further detail.

\section{Diffusion model}
\label{sec:diffusion}
In this section we extend the above results by obtaining a full description of a simple diffusion process such that its sampling distribution is known \emph{exactly} and has a Taylor expansion about $\gr=\infty$ consistent with \eref{eq:zerothorder} and \eref{eq:firstorder}. For simplicity we will obtain our diffusion as the limit of an appropriately rescaled Moran model, although we expect our results to hold for a more general class of discrete models of reproduction within the domain of convergence of the Wright-Fisher diffusion. %The extension to incorporate natural selection is considered later.

\subsection{Neutral Moran model}
A population of $N$ haploid, monoecious individuals evolves as a multitype birth-and-death process in continuous time. Each individual carries a haplotype comprising a pair of alleles $(i,j) \in [K] \times [L]$, one at locus A and one at locus B. Let $Z_{ij}(\ut) \in \{0,1,\ldots,N\}$ denote the number of $(i,j)$ haplotypes in the population at time $\ut\in \bbR_{\geq 0}$, and $\bfZ(\ut) = (Z_{ij}(\ut))_{i \in [K], j \in [L]}$. The population evolves as follows. At rate $N^2/2$ a reproduction event occurs, in which an individual is chosen uniformly at random from the population to die. It is replaced by a copy of another individual also chosen uniformly at random (the same individual could be chosen; whether sampling is with or without replacement does not affect the diffusion limit). Independently, each locus of each haplotype undergoes mutation: any locus A mutates at rate $\tA/2$ and its allele is updated according to the transition matrix $\bfP^\A = (P_{ik}^\A)_{i,k\in[K]}$; similarly any locus B mutates at rate $\tB/2$ and its allele is updated according to $\bfP^\B = (P_{jl}^\B)_{j,l\in[L]}$. Finally, each haplotype independently undergoes recombination at rate $\rho/2$: at such an event, it is replaced by a haplotype formed by sampling two alleles (one for each locus) independently from the population. Putting all this together, the rate at which a haplotype $(i,j)$ dies and is replaced by a haplotype $(k,l)$ when $\bfZ(\ut) = \bfz$ is given by
\begin{multline*}
\gl_{ij,kl}^{(N)}(\bfz) = \frac{z_{ij}}{N}\left[\frac{N^2}{2}\frac{z_{kl}}{N} + N\left(\frac{\tA}{2}P_{ik}^\A\gd_{jl} + \frac{\tB}{2}P_{jl}^\B\gd_{ik} + \frac{\gr}{2}\frac{z_{k\cdot}}{N}\frac{z_{\cdot l}}{N}\right)\right],\\ (i,j),(k,l) \in [K]\times [L].
\end{multline*}
Notice that, as is standard \citep[e.g.][]{baa:her:2008}, we decouple the mutation and recombination mechanisms from reproduction (and from each other). This simplifies the analysis without unduly affecting the diffusion limit.

We will change variables by introducing the collection
\[
\bfM^{(N)}(\ut) := \{\bfXN(\ut), \bfYN(\ut), \bfDN(\ut)\},
\]
where
\begin{align*}
\bfXN(\ut) = (X^{(N)}_i(\ut))_{i\in[K]} &= \left(\frac{Z_{i\cdot}(\ut)}{N}: i \in [K]\right),\\
\bfYN(\ut) = (Y^{(N)}_j(\ut))_{j\in[L]} &= \left(\frac{Z_{\cdot j}(\ut)}{N}: j \in [L]\right),\\
\bfDN(\ut) = (D^{(N)}_{ij}(\ut))_{i\in[K],j\in[L]} &= \left(\!\frac{Z_{ij}(\ut)}{N} - \frac{Z_{i\cdot}(\ut)}{N}\frac{Z_{\cdot j}(\ut)}{N}: i \in [K], j\in[L] \!\right).
\end{align*}
That is, we describe the state of the Moran model at time $\ut$ by the marginal allele frequencies %$\bfXN(\ut)$ at locus A, the marginal allele frequencies $\bfYN(\ut)$ at locus B
and the coefficients of linkage disequilibrium \citep[see, e.g.][p69, p227]{ewe:2004:I}. % $\bfDN(\ut)$ between each pair of alleles at the two loci. In anticipation of the diffusion limit we suppose that $\tA := 4Nu_\A$, $\tB := 4Nu_\B$, and $\gr_\gb := 4N^\gb r$ remain fixed when $N\to\infty$, for some fixed $\gb \in (0,1]$.
We will write this succinctly by arranging the variables in a linear order:
\[
(X_1^{(N)},\ldots, X_K^{(N)}, Y_1^{(N)}, \ldots, Y_L^{(N)}, D^{(N)}_{11}, \ldots, D^{(N)}_{KL})',
\]
and thinking of $\bfM^{(N)}(\ut)$ as a vector of length $\gL := K + L + KL$. The process $(\bfM^{(N)}(\ut) : \ut = 0,1,\ldots)$ is then Markov on a state space we denote by $\gD_{KL-1}^{(N)}$, which is a rational subset (those points consistent with $\si\sj Z_{ij} = N$) of the $(KL - 1)$-dimensional shifted simplex
\begin{multline*}
\gD_{KL-1} = \Bigg\{ (\bfx,\bfy,\bfd) \in [0,1]^{K} \times [0,1]^L \times [-1,1]^{KL} : \\\si x_i = 1 = \sj y_j, \si d_{ij} = 0 = \sj d_{ij}\Bigg\}.
\end{multline*}
To find the diffusion limit we first need the conditional means and covariances of the increments 
\[
\gD\bfM^{(N)} (\ut) := \bfM^{(N)}(\ut + \dd\ut) - \bfM^{(N)}(\ut).
\]
%\begin{align*}
%\gD\bfXN (\ut) &:= \bfXN(\ut + 1) - \bfXN(\ut),\\
%\gD\bfYN (\ut) &:= \bfYN(\ut + 1) - \bfYN(\ut),\\
%\gD\bfDN (\ut) &:= \bfDN(\ut + 1) - \bfDN(\ut).
%\end{align*}
From these, and under the assumption that $\tA$, $\tB$, and $\gr$ are fixed as $N\to\infty$, it is possible to show that the model converges to a (Wright-Fisher) diffusion limit \citep[Example 10.3.9, p433]{eth:kur:1986}. Recall however that our interest is when $\gr_\gb$, rather than $\gr$, is fixed, so below we write these increments in terms of $\gr_\gb$ using $\gr = \gr_\gb N^{1-\gb}$.

In the following, for convenience we drop the dependence on $\ut$.

\begin{proposition}
\label{prop:WF}
In the neutral two-locus Moran model with mutation and recombination, the conditional means and covariances of increments of $\bfM^{(N)}$ %$\{\bfXN(\ut), \bfYN(\ut),$ $\bfDN(\ut)\}$
are given by
\allowdisplaybreaks
\begin{align}
\label{eq:Xi}\lim_{\dd\ut \to 0}(\dd\ut)^{-1}\bbE[\gD X_i^{(N)}\mid \bfM^{(N)}] = {}& \frac{\tA}{2}\sk (P_{ki}^\A - \gd_{ik})%R_{ki}^\A
X_k^{(N)}, \\
\lim_{\dd\ut \to 0}(\dd\ut)^{-1}\bbE[\gD Y_j^{(N)}\mid \bfM^{(N)}] = {}& \frac{\tB}{2}\sl (P_{lj}^\B - \gd_{jl})%R_{lj}^\B
Y_l^{(N)},\\
\lim_{\dd\ut \to 0}(\dd\ut)^{-1}\bbE[\gD D_{ij}^{(N)}\mid \bfM^{(N)}] = {} & -\frac{\gr_\gb}{2N^{\gb-1}}D^{(N)}_{ij} - D^{(N)}_{ij} \notag\\ {}&+ \frac{\tA}{2}\sk (P_{ki}^\A - \gd_{ik})%R_{ki}^\A
D^{(N)}_{kj} \notag\\
{}&  {}+ \frac{\tB}{2}\sl (P_{lj}^\B - \gd_{jl})%R_{lj}^\B
D^{(N)}_{il} + O\left(\frac{1}{N^{\gb}}\right),\label{eq:Dij}%\\
\end{align}
\begin{align}
\lim_{\dd\ut \to 0}(\dd\ut)^{-1}\cov[\gD X_i^{(N)},\gD X_k^{(N)}\mid \bfM^{(N)}] = {}& X^{(N)}_i(\gd_{ik} - X^{(N)}_k) + O\left(\frac{1}{N^\gb}\right), \notag\\%\label{eq:XiXk}\\
\lim_{\dd\ut \to 0}(\dd\ut)^{-1}\cov[\gD Y_j^{(N)},\gD Y_l^{(N)}\mid \bfM^{(N)}] = {}& Y^{(N)}_j(\gd_{jl} - Y^{(N)}_l) + O\left(\frac{1}{N^\gb}\right),\notag\\
\lim_{\dd\ut \to 0}(\dd\ut)^{-1}\cov[\gD X_i^{(N)}, \gD Y_j^{(N)}\mid \bfM^{(N)}] = {}& D^{(N)}_{ij} + O\left(\frac{1}{N^\gb}\right),\notag\\%\label{eq:XiYj}\\
%\displaybreak
\lim_{\dd\ut \to 0}(\dd\ut)^{-1}\cov[\gD X_i^{(N)}, \gD D^{(N)}_{kl} \mid \bfM^{(N)}] = {}& D^{(N)}_{kl}(\gd_{ik} - X^{(N)}_i) - X^{(N)}_kD^{(N)}_{il} \notag\\ &{}+ O\left(\frac{1}{N^{\gb}}\right), \notag\\%\label{eq:XiDkl}\\
%\displaybreak
\lim_{\dd\ut \to 0}(\dd\ut)^{-1}\cov[\gD Y_j^{(N)}, \gD D^{(N)}_{kl} \mid \bfM^{(N)}] = {}& D^{(N)}_{kl}(\gd_{jl} - Y^{(N)}_j) - Y^{(N)}_lD^{(N)}_{kj} \notag\\ &{}+ O\left(\frac{1}{N^{\gb}}\right),\notag\\
\lim_{\dd\ut \to 0}(\dd\ut)^{-1}\cov[\gD D_{ij}^{(N)}, \gD D_{kl}^{(N)} \mid \bfM^{(N)}] = {}& X_i^{(N)}Y_j^{(N)}(\gd_{ik} - X_k^{(N)})(\gd_{jl} - Y_l^{(N)}) \notag\\
& \hspace{-40pt} {}+ D^{(N)}_{kj}X_i^{(N)}Y_l^{(N)} + D_{il}^{(N)}X_k^{(N)}Y_j^{(N)} \notag\\
& \hspace{-40pt}{}+ D^{(N)}_{ij}(X_k^{(N)}Y_l^{(N)} - \gd_{ik}Y_l^{(N)} - \gd_{jl}X_k^{(N)})\notag\\
& \hspace{-40pt}{}+ D^{(N)}_{kl}(X_i^{(N)}Y^{(N)}_j - \gd_{ik}Y_j^{(N)} - \gd_{jl}X_i^{(N)}) \notag\\
& \hspace{-40pt}{}+ D_{ij}^{(N)}(\gd_{ik}\gd_{jl} - D^{(N)}_{kl}) %+ \left(\frac{\gr_\gb}{4N^\gb}\right)^2D_{ij}^{(N)}D_{kl}^{(N)}\notag\\
%& \phantom{\frac{1}{2N}\Big(} {}+
+ O\left(\frac{1}{N^{\gb}}\right).\notag%\label{eq:DijDkl}
\end{align}
%where $R_{ki}^\A := P_{ki}^\A - \gd_{ik}$ for $i,k\in[K]$, and $R_{lj}^\B := P_{lj}^\B - \gd_{jl}$ for $j,l\in[L]$.
Higher order moments of order $m\geq 2$ are $O(N^{-(m-2)})$.
\end{proposition}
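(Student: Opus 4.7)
The plan is to carry out a direct generator computation. Let $\sL$ denote the infinitesimal generator of the continuous-time Markov chain $(\bfZ(\ut))_{\ut\geq 0}$ on the population state, so that
\[
\sL f(\bfz) = \sum_{i,j,k,l}\gl_{ij,kl}^{(N)}(\bfz)[f(\bfz - \bfe_{ij} + \bfe_{kl}) - f(\bfz)].
\]
The conditional means in the proposition are $\sL f$ evaluated at linear coordinate functions of $\bfM^{(N)}$, while the conditional covariances equal the carr\'e du champ $\gG(f,g) := \sL(fg) - f\sL g - g\sL f$, so I would apply $\sL$ and $\gG$ to the required coordinates and pairs, splitting $\gl_{ij,kl}^{(N)}$ into its reproduction, mutation-at-$\A$, mutation-at-$\B$, and recombination components and tabulating each contribution separately.

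For the marginal allele frequencies I would observe that a jump $(i',j')\to(k',l')$ changes $Z_{i\cdot}$ by $\gd_{ik'}-\gd_{ii'}$. Reproduction and recombination preserve marginal allele frequencies, so their contributions collapse to zero via $\sum_{k,l}z_{kl} = N$; mutation-at-$\B$ gives zero because the indicator $\gd_{ik}=1$ in its rate forces the $\A$-allele unchanged; and the mutation-at-$\A$ term gives \eref{eq:Xi} directly, with the $Y_j^{(N)}$ formula following by symmetry. The covariances $\cov[\gD X^{(N)}_i, \gD X^{(N)}_k\mid\bfM^{(N)}]$, $\cov[\gD Y^{(N)}_j, \gD Y^{(N)}_l\mid\bfM^{(N)}]$, and $\cov[\gD X^{(N)}_i, \gD Y^{(N)}_j\mid\bfM^{(N)}]$ then follow by the same case analysis applied to $\gG$: reproduction supplies all leading terms (in particular $\gG(X_i, Y_j) = D^{(N)}_{ij} + O(N^{-1-\gb})$ after using the identity $z_{ij}/N - X_i^{(N)}Y_j^{(N)} = D^{(N)}_{ij}$), mutation contributes nothing to these three (e.g.\ in $\gG(X_i, Y_j)$ the locus-$\B$ factor $\gd_{jl'}-\gd_{jj'}$ vanishes under mutation-at-$\A$ because $j'=l'$, and symmetrically), and recombination generates only $O(\gr/N^2) = O(N^{-1-\gb})$.

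For the linkage-disequilibrium coordinates I would use the product rule $\sL(fg) = f\sL g + g\sL f + \gG(f,g)$ with $f = Z_{i\cdot}/N$ and $g = Z_{\cdot j}/N$ to write
\[
\lim_{\dd\ut\to 0}(\dd\ut)^{-1}\bbE[\gD D^{(N)}_{ij}\mid\bfM^{(N)}] = \sL(Z_{ij}/N) - X_i^{(N)}\sL Y_j^{(N)} - Y_j^{(N)}\sL X_i^{(N)} - \gG(X_i, Y_j).
\]
In the mutation-at-$\A$ piece of $\sL(Z_{ij}/N)$, substituting $Z_{kj}/N = D^{(N)}_{kj} + X_k^{(N)}Y_j^{(N)}$ causes the cross term to cancel exactly against $-Y_j^{(N)}\sL X_i^{(N)}$, leaving $\tfrac{\tA}{2}\sk(P^\A_{ki}-\gd_{ik})D^{(N)}_{kj}$; the mutation-at-$\B$ contribution is symmetric. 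The recombination piece of $\sL(Z_{ij}/N)$ evaluates to $-(\gr/2)D^{(N)}_{ij} = -\tfrac{\gr_\gb}{2N^{\gb-1}}D^{(N)}_{ij}$, and $-\gG(X_i,Y_j) = -D^{(N)}_{ij} + O(N^{-\gb})$, yielding \eref{eq:Dij}. The three remaining covariances involving $D^{(N)}$ are obtained analogously by expanding each $D^{(N)}$ as $Z/N - XY$ and using the bilinearity of $\gG$; reproduction again provides the leading terms, while mutation and recombination contribute only $O(N^{-\gb})$ corrections.

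Finally, each jump changes every coordinate of $\bfM^{(N)}$ by $O(1/N)$ and the total jump rate is $O(N^2)$, dominated by reproduction, so $\bbE[|\gD\bfM^{(N)}|^m\mid\bfM^{(N)}]/\dd\ut = O(N^{2-m}) = O(N^{-(m-2)})$, giving the higher-moment bound. The main obstacle is the algebraic bookkeeping in $\cov[\gD D^{(N)}_{ij}, \gD D^{(N)}_{kl}\mid\bfM^{(N)}]$, whose expansion produces many Kronecker-delta products that must be reorganised into the claimed nested form; the cancellations become transparent once $Z_{ij}/N$ is consistently replaced by $D^{(N)}_{ij}+X^{(N)}_iY^{(N)}_j$ and the index collapses via $\sum_{k,l}z_{kl}=N$ are applied systematically.
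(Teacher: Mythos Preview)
Your proposal is correct and follows essentially the same direct-computation approach as the paper: both derive the infinitesimal means and covariances from the Moran model's jump rates $\gl_{ij,kl}^{(N)}$, with the paper phrasing this as computing the first four moments of $\bfZ(\ut+\dd\ut)\mid\bfZ(\ut)$ and then passing to $\bfM^{(N)}$, while you organise the same calculation via the generator and carr\'e du champ $\gG(f,g)=\sL(fg)-f\sL g-g\sL f$. The one minor slip is the order of the recombination contribution to the covariances---the total recombination rate is $N\gr/2$, so its contribution to $\gG$ is $O(\gr/N)=O(N^{-\gb})$ rather than $O(\gr/N^{2})$---but this is still within the stated error and does not affect the argument.
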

\begin{proof}
These expressions follow directly from the first four moments of $\bfZ(\ut+\dd\ut)\mid \bfZ(\ut)$, which are easily computed by noting that
\begin{multline*}
\bbE[f(\bfZ(\ut + \dd\ut))\mid \bfZ(\ut) = \bfz] = \sum_{(i,j)}\sum_{(k,l)} f(\bfz - \bfe_{ij} + \bfe_{kl})\gl_{ij,kl}^{(N)}(\bfz) \dd\ut \\
{}+ f(\bfz)\left[1 - \frac{N}{2}(N + \tA + \tB + \gr)\dd\ut\right] + o(\dd\ut).
\end{multline*}
For example, choosing $f(\bfz) = z_{uv}$ we find
\begin{multline*}
\bbE[Z_{uv}(\ut + \dd\ut)\mid \bfZ(\ut) = \bfz] = z_{uv} \\ {}+ N\left[\frac{\tA}{2}\sk(P_{ku}^\A - \gd_{ku}) \frac{z_{kv}}{N} + \frac{\tB}{2}\sl(P_{lv}^\A - \gd_{lv}) \frac{z_{ul}}{N} + \frac{\gr}{2}\left(\frac{z_{u\cdot}}{N}\frac{z_{\cdot v}}{N} - \frac{z_{uv}}{N}\right)\right]\dd\ut \\{}+ o(\dd\ut),
\end{multline*}
and hence we recover \eqref{eq:Xi} via
\begin{align*}
\bbE[\gD X_u\mid \bfM^{(N)}] &= \frac{1}{N}\sum_{v=1}^L \left(\bbE[Z_{uv}(\ut + \dd\ut)\mid \bfZ(\ut)] - Z_{uv}\right) \\ &= \frac{\tA}{2}\sk(P_{ku}^\A - \gd_{ku}) X_{k}^{(N)}\dd\ut + o(\dd\ut).
\end{align*}
The remaining terms follow similarly; we omit the straightforward but lengthy algebraic details.
\end{proof}

To prepare for our diffusion limit, we must rescale time; from \eqref{eq:Dij} it is clear that to obtain a nontrivial limit we should let $t = N^{1-\gb}\ut$. Now introduce the conditional mean vector $\bfw^{(N)}$ and conditional covariance matrix $\bfs^{(N)}$ on this timescale, defined by
\begin{multline}
\lim_{\dd t \to 0}(\dd t)^{-1}\bbE[\gD\bfM^{(N)}\mid \bfM^{(N)}(t)=\bfm] = \\N^{\gb-1}\lim_{\dd\ut \to 0}(\dd\ut)^{-1}\bbE[\gD\bfM^{(N)}\mid \bfM^{(N)}(\ut)=\bfm]= :\bfw^{(N)}(\bfm),\label{eq:w}
\end{multline}
\begin{multline}
\lim_{\dd t \to 0}(\dd t)^{-1}\cov[\gD\bfM^{(N)}\mid \bfM(t)=\bfm] =\\ N^{\gb-1}\lim_{\dd\ut \to 0}(\dd\ut)^{-1}\cov[\gD\bfM^{(N)}\mid \bfM^{(N)}(\ut)=\bfm] =: N^{\gb-1}\bfs^{(N)}(\bfm), \label{eq:s}
\end{multline}
with entries determined by \propref{prop:WF}. Thus, with $\bfm = (x_1,\dots,x_K, \allowbreak y_1,\dots,y_L,  \allowbreak d_{11},\dots,d_{KL})$, equations \eref{eq:Xi}--\eref{eq:Dij} show that
\begin{align}
&&\bfw^{(N)}(\bfm) &= \bfw(\bfm) + O(N^{\gb-1}),\notag\\ \text{where }&&\bfw(\bfm) &= \Big(\underbrace{\vphantom{\frac{\gr_\gb}{2}}0,\ldots 0}_{K}, \underbrace{\vphantom{\frac{\gr_\gb}{2}}0, \ldots 0}_{L}, \underbrace{-\frac{\gr_\gb}{2}d_{11},\ldots, -\frac{\gr_\gb}{2}d_{KL}}_{K \times L}\Big)', \label{eq:w2}
\end{align}
with $\bfs^{(N)}(\bfm) = \bfs(\bfm) + O(N^{-\gb})$ determined in a similar fashion:
\[
\bfs(\bfm) = \begin{bmatrix}
\bfs_{\bf XX}(\bfm) & \bfs_{\bf XY}(\bfm) & \bfs_{\bf XD}(\bfm)\\
\bfs_{\bf XY}(\bfm) & \bfs_{\bf YY}(\bfm) & \bfs_{\bf YD}(\bfm)\\
\bfs_{\bf XD}(\bfm) & \bfs_{\bf YD}(\bfm) & \bfs_{\bf DD}(\bfm)
\end{bmatrix},
\]
where
%\allowdisplaybreaks
\begin{align*}
[\bfs_{\bf XX}(\bfm)]_{ik} = {}& x_i(\gd_{ik} - x_k),\\
[\bfs_{\bf YY}(\bfm)]_{jl} = {}& y_j(\gd_{jl} - y_l),\\
[\bfs_{\bf XY}(\bfm)]_{ij} = {}& d_{ij},\\
[\bfs_{\bf XD}(\bfm)]_{i,kl} = {}& d_{kl}(\gd_{ik} - x_i) - x_kd_{il},\\
[\bfs_{\bf YD}(\bfm)]_{j,kl} = {}& d_{kl}(\gd_{jl} - y_j) - y_ld_{kj},\\
[\bfs_{\bf DD}(\bfm)]_{ij,kl} = {}& x_iy_j(\gd_{ik} - x_k)(\gd_{jl} - y_l) 
 {}+ d_{kj}x_iy_l + d_{il}x_ky_j  \\
& {}+ d_{ij}(x_ky_l - \gd_{ik}y_l - \gd_{jl}x_k) 
 {}+ d_{kl}(x_iy_j - \gd_{ik}y_j - \gd_{jl}x_i) \\
&{}+ d_{ij}(\gd_{ik}\gd_{jl} - d_{kl}).
\end{align*}
Notice in particular the different leading orders of the two quantities in \eqref{eq:w} and \eqref{eq:s}: %for $\gb < 1$
the mean increments are of $O(1)$ on this timescale while the covariances are of $O(N^{\gb-1})$. It is this difference, which is a consequence of our assumption that the recombination probability $r$ is $O(N^{-\gb})$ for $\gb < 1$, that leads to a novel diffusion limit. Under the usual choice of $\gb = 1$ it is well known that we see convergence to a diffusion process after a linear rescaling of time. %In the variables we use here,
In the special case of a Wright-Fisher model and $K = L = 2$, the diffusion limit for $\bfM^{(N)}(\lfloor N \ut \rfloor)$ as $N\to\infty$ was obtained by \citet{oht:kim:1969:GRC, oht:kim:1969:G}. Our interest is however in $\gb \in (0,1)$, for which %the recombination probability
$r$ is larger, and the loss of linkage disequilibrium (LD) is subsequently much faster. Intuitively, we should expect such loss to resemble the exponential decay predicted in an infinitely large population, but with small fluctuations about this deterministic behaviour. The diffusion process we define below quantifies these fluctuations precisely.

\subsection{Gaussian diffusion limit of fluctuations in linkage disequilibrium}
We first provide a heuristic description of the diffusion limit. First, observe from \eqref{eq:w} and \eqref{eq:s} that, provided $\bfM^{(N)}(0) \to \bfM(0)$ as $N\to\infty$ and that $\gb \in [0,1)$, then
\begin{equation}
\label{eq:M}
\bfM^{(N)} \overset{d}{\to} \bfM := \left\{ (\bfX(0), \bfY(0), \bfD(0)e^{-\gr_\gb t/2})' : t\geq 0\right\}, \qquad N\to\infty,
\end{equation}
the deterministic exponential decay in LD typical of an infinitely large population. See \citet{baa:her:2008} for a formal statement of this law-of-large-numbers type result for the Moran model with recombination. For the corresponding central limit theorem, we seek a diffusion limit for
\begin{equation}
\label{eq:mainrescaling}
\bfU^{(N)}(t) := r_N[\bfM^{(N)}(t) - \bfM(t)],
\end{equation}
for some rescaling $r_N \to \infty$. In our application the appropriate choice is
\[
r_N := N^{(1-\gb)/2},
\]
which can be regarded as the one on which both recombination and genetic drift are observable on the fastest timescale \citep{jen:son:2012:AAP}. We will assume this scaling henceforward. To find the limit $\bfU = \lim_{N\to\infty} \bfU^{(N)}$, write
\begin{multline}
\label{eq:maindecomposition}
\bfU^{(N)}(t) = r_N\Bigg[[\bfM^{(N)}(0) - \bfM(0)] \\
{}+ \int_0^t [\bfw^{(N)}(\bfM^{(N)}(s)) - \bfw(\bfM(s))]\dd s + \Ma^{(N)}(t)\Bigg],
\end{multline}
where
\[
\Ma^{(N)}(t) := \bfM^{(N)}(t) - \bfM^{(N)}(0) - \int_0^t \bfw^{(N)}(\bfM^{(N)}(s)) \dd s
\]
describes the deviations of $\bfM^{(N)}(t)$ from its expected behaviour and is a martingale. It suffices to characterize the limits of each of the three grouped terms on the right of \eqref{eq:maindecomposition}. For the first term we assume that it converges to a limit, $\bfU^{(N)}(0) \overset{d}{\to} \bfU(0)$ as $N \to\infty$. For the second term, from \eqref{eq:w2} we should expect
\begin{align}
\MoveEqLeft r_N\int_0^t [\bfw^{(N)}(\bfM^{(N)}(s)) - \bfw(\bfM(s))]\dd s \notag\\
&= r_N\int_0^t\left[-\frac{\gr_\gb}{2}[\bfM^{(N)}(s) - \bfM(s)] +O(N^{\gb-1})\right] \dd s\notag\\
&= \int_0^t \left[-\frac{\gr_\gb}{2}\bfU^{(N)}(s) +O(N^{(\gb-1)/2})\right]\dd s \notag\\
&\overset{d}{\to} -\frac{\gr_\gb}{2}\int_0^t \bfU(s) \dd s, \qquad N\to\infty. \label{eq:middleterm}
\end{align}
Finally, we obtain a complete description of the limit $r_N\Ma^{(N)} \overset{d}{\to} \Ma$ as $N\to\infty$ by an application of the martingale central limit theorem \citep[Theorem 7.1.4]{eth:kur:1986}; we find
\[
\Ma(t) = \int_0^t \bfgs(\bfM(s)) \dd\bfW(s),
\]
where $\bfgs\bfgs' = \bfs$, and $\bfW$ is a $(KL-1)$-dimensional Brownian motion. In summary then, we expect $\bfU$ to satisfy
\begin{equation}
\label{eq:U}
\bfU(t) = \bfU(0) - \frac{\gr_\gb}{2}\int_0^t \bfU(s) \dd s + \int_0^t \bfgs(\bfM(s)) \dd\bfW(s).
\end{equation}
Our main result formalizes this argument, as follows. 

\begin{theorem}
\label{thm:nor:1975}
Suppose that $\bfU^{(N)}(0) \overset{d}{\to} \bfU(0)$ as $N\to\infty$. Then for each $t >0$, as $N\to\infty$,
\[
\sup_{s\leq t}|\bfM^{(N)}(s) - \bfM(s)| \overset{d}{\to} 0;
\]
$N^{(1-\gb)/2}\Ma^{(N)} \overset{d}{\to} \Ma$, where $\Ma$ has Gaussian, independent increments with mean zero, and with
\begin{equation}
\label{eq:martingale-covariance}
\bbE[\Ma(t)\Ma(t)'] = \int_0^t \bfs(\bfM(s)) \dd s;
\end{equation}
and $\bfU^{(N)} \overset{d}{\to} \bfU$, satisfying \eqref{eq:U}.
\end{theorem}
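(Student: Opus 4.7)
The plan is to formalize the three-step decomposition in \eref{eq:maindecomposition} that the paper has already sketched heuristically, by adapting the density-dependent population process framework of Chapters~7 and 11 of \citet{eth:kur:1986} to the novel scaling $\gr = \gr_\gb N^{1-\gb}$. Since the state space $\gD_{KL-1}$ is compact and $\bfw, \bfs$ are polynomials (hence Lipschitz on this state space), the argument reduces to verifying the convergence of drift, martingale, and initial-condition terms separately, and then reassembling them via Gronwall's lemma and a continuous mapping theorem for stochastic integrals.

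For the fluid-limit claim, I would write
\[
\bfM^{(N)}(t) - \bfM(t) = [\bfM^{(N)}(0) - \bfM(0)] + \int_0^t[\bfw^{(N)}(\bfM^{(N)}(s)) - \bfw(\bfM(s))]\dd s + \Ma^{(N)}(t),
\]
and exploit the Lipschitz property of $\bfw$ together with the uniform bound $\|\bfw^{(N)} - \bfw\|_\infty = O(N^{\gb-1})$ from \eref{eq:w2}. Doob's $L^2$ inequality combined with \eref{eq:s} yields $\bbE[\sup_{s\leq t}|\Ma^{(N)}(s)|^2] = O(N^{\gb-1}) \to 0$, and Gronwall's inequality then upgrades this to $\sup_{s\leq t}|\bfM^{(N)}(s)-\bfM(s)|\overset{d}{\to}0$. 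For the martingale CLT, I would apply Theorem~7.1.4 of \citet{eth:kur:1986} to the rescaled martingale $r_N\Ma^{(N)}$ with $r_N=N^{(1-\gb)/2}$. Two conditions must be checked: (i) the predictable quadratic variation $[r_N\Ma^{(N)}]_t = \int_0^t \bfs^{(N)}(\bfM^{(N)}(s))\dd s + o(1)$ converges in probability to $\int_0^t \bfs(\bfM(s))\dd s$, which follows from the fluid limit just established together with continuity of $\bfs$; and (ii) each jump of $r_N\Ma^{(N)}$ is of magnitude $O(N^{(1-\gb)/2}/N) = O(N^{-(1+\gb)/2})$, vanishing deterministically, so the Lindeberg-type condition is trivially met. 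This identifies the limit $\Ma$ as a Gaussian martingale with independent increments and the covariance \eref{eq:martingale-covariance}.

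For the final claim $\bfU^{(N)} \overset{d}{\to} \bfU$, observe from \eref{eq:w2} that $\bfw$ is linear in the $\bfd$-coordinates and identically zero in the $\bfx,\bfy$ coordinates; consequently
\[
r_N[\bfw^{(N)}(\bfM^{(N)}(s)) - \bfw(\bfM(s))] = -\tfrac{\gr_\gb}{2}\bfU^{(N)}(s) + O(N^{-(1-\gb)/2}),
\]
uniformly in $s\leq t$, the error coming from the $O(N^{\gb-1})$ remainder in $\bfw^{(N)}-\bfw$. Substituting into \eref{eq:maindecomposition} gives
\[
\bfU^{(N)}(t) = \bfU^{(N)}(0) - \tfrac{\gr_\gb}{2}\int_0^t \bfU^{(N)}(s)\dd s + r_N\Ma^{(N)}(t) + o(1).
\]
The joint convergence $(\bfU^{(N)}(0),\, r_N\Ma^{(N)}) \overset{d}{\to} (\bfU(0),\Ma)$ combined with continuity of the linear Volterra solution map then delivers the unique solution of \eref{eq:U} in the limit.

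The main obstacle is establishing the tightness of $\{\bfU^{(N)}\}$ needed to rigorously pass to the limit in the integral equation, since $\bfU^{(N)}$ is defined by amplifying a vanishing difference and its tightness does not follow from compactness of $\gD_{KL-1}$. I would establish tightness via Aldous' criterion, controlling $\bbE[|\bfU^{(N)}(t)|^2]$ uniformly in $N$ and $t\leq T$ through a second-moment Gronwall bound that uses the martingale representation and the $O(1)$ bound on the rescaled covariance $N^{1-\gb}\bfs^{(N)}/N^{1-\gb} = \bfs^{(N)}$. An alternative, and perhaps cleaner, route is to exploit the fact that the limiting SDE \eref{eq:U} is linear and admits the explicit Ornstein-Uhlenbeck solution $\bfU(t) = e^{-\gr_\gb t/2}\bfU(0) + \int_0^t e^{-\gr_\gb(t-s)/2}\dd\Ma(s)$; one can then derive the analogous prelimit representation for $\bfU^{(N)}$ by an integrating-factor argument and pass to the limit term by term using the martingale CLT and continuity of integration against continuous deterministic kernels.
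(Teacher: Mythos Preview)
Your proposal is correct and follows the same underlying route as the paper: both arguments sit squarely in the density-dependent population process framework of \citet{eth:kur:1986}, verifying (a) a fluid limit for $\bfM^{(N)}$, (b) the martingale CLT for $r_N\Ma^{(N)}$ via jump-size and quadratic-variation conditions, and (c) identification of the linearized drift $-\tfrac{\gr_\gb}{2}\bfU^{(N)}$ from $\bfw^{(N)}-\bfw$. The conditions you check---uniform Lipschitz drift, $\|\bfw^{(N)}-\bfw\|_\infty = O(N^{\gb-1})$, jumps of size $O(N^{-(1+\gb)/2})$, and convergence of $r_N^2[\bfM^{(N)}]_t$ to $\int_0^t\bfs(\bfM(s))\dd s$---are exactly the paper's conditions (i)--(iv).

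The only real difference is packaging. Rather than assembling the pieces by hand and then confronting tightness of $\{\bfU^{(N)}\}$ separately, the paper invokes a single off-the-shelf result, Theorem~2.11 of \citet{kan:etal:2014}, which wraps the fluid limit, martingale CLT, and the passage to the limiting integral equation (including the tightness you flag as the main obstacle) into one statement once conditions (i)--(iv) are verified. Your hands-on approach buys transparency---one sees exactly where Gronwall, Doob, and the linear Volterra map enter---while the paper's citation buys brevity and avoids having to argue tightness directly. Either of your two proposed fixes for tightness (Aldous' criterion with a second-moment Gronwall bound, or the explicit integrating-factor representation exploiting linearity of \eqref{eq:U}) would work; the second is cleaner here precisely because $\bfw$ is linear, and is in effect what the cited theorem does internally for this case.
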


\begin{proof}[Proof of \thmref{thm:nor:1975}]
This is an application of a central limit theorem for density dependent population processes; for textbook coverage see \citet[Chapter 11]{eth:kur:1986} and for a recent treatment see \citet{kan:etal:2014}. We apply Theorem 2.11 of \citet{kan:etal:2014}. To do so we need to validate each of the assertions that led to \eqref{eq:U} above by checking the following sufficient conditions (i)--(iv). %, which are given in Theorem 2.11 of \citet{kan:etal:2014}.
(\citet[][Theorem 2.11]{kan:etal:2014} is rather more general than is required here: it permits the state space of $\bfM^{(N)}$ to be unbounded, and for $\bfM^{(N)}$ to depend on other processes that evolve on faster timescales than that of the diffusion. We omit those conditions which are not needed.)

\noindent \emph{(i) {\bf The Moran process converges to an identifiable, deterministic limit.} This is guaranteed by the following: %For each compact $K \subseteq \bbR^{\gL}$,
the infinitesimal generator $\cA_N$ of $\bfM^{(N)}$ satisfies
\[
\lim_{N\to\infty} \sup_{\bfm \in %K \cap 
\gD_{KL-1}^{(N)}}\left|\cA_N f(\bfm) - \cA f(\bfm)\right| = 0, \qquad f \in \cD(\cA),
\]
for a generator $\cA$ with domain $\cD(\cA)$.}

\noindent \emph{(ii) {\bf Fluctuations about the deterministic limit are well behaved.} More precisely, $\bfR^{(N)}$ is a local martingale and the covariations processes $[\bfM^{(N)}]_t \overset{d}{\to} 0$.}

\noindent \emph{(iii) {\bf Contributions of $O(r_N^{-1})$ to the error $\bfw^{(N)} - \bfw$ can be identified.} These would contribute to the limiting drift of $\bfU(t)$, and a sufficient condition to identify them is: there exists a continuous function $\bfG_0:\gD_{KL-1} \to \bbR^{\gL}$ (recall $\gL = K+L+KL$) such that %for each compact $K \subseteq \bbR^{\gL}$,
\[
\lim_{N\to\infty} \sup_{\bfm\in %K\cap
\gD_{KL-1}^{(N)}}\left|r_N\left[\bfw^{(N)}(\bfm) - \bfw(\bfm)\right] - \bfG_0(\bfm)\right| = 0.
\]}
\noindent \emph{(iv) {\bf The martingale central limit theorem applies to $r_N \bfR^{(N)}$.} This is guaranteed by the following:
\begin{equation}
\label{eq:jumpsize}
\lim_{N\to\infty}\bbE\left[\sup_{s\leq t} r_N \left| \bfM^{(N)}(s) - \bfM^{(N)}(s-) \right|\right] = 0,
\end{equation}
and there exists a continuous $\bfG:\gD_{KL-1} \to \bbR^{\gL\times \gL}$ such that for each $t > 0$,
\begin{equation}
\label{eq:covariation}
r_N^2[\bfM^{(N)}]_t - \int_0^t \bfG(\bfM^{(N)}(s)) \dd s \overset{d}{\to} 0.
\end{equation}
}We address each of these requirements in turn.

\emph{(i)} Convergence of $\cA_Nf(\bfm)$ to $\cA f(\bfm) := \bfw.\nabla f(\bfm)$, the generator of $\bfM$ [see \eqref{eq:M}], is immediate from \propref{prop:WF}. Convergence is uniform in $\bfm$ because the $O(N^{-\gb})$ terms in \propref{prop:WF} have coefficients that are polynomials in $\bfM^{(N)}$ on a compact space. %\fixme{Enough detail here? Analogous arguments for the Wright-Fisher model in e.g.\ \citet[p31]{eth:nag:1980} have calculations like ``The fourth central moments of the binomial distribution are bounded by the following polynomial\dots ''. And indeed we previously used a lemma about uniform error of the third central moments of the binomial distribution (probably playing the same role). But the \emph{uniform} convergence of the moments in \propref{prop:WF} is surely obvious isn't it!?}

\emph{(ii)} Since the state space is bounded, for $\bfR^{(N)}$ to be a martingale it suffices that the jump rate is uniformly bounded \citep[Proposition 2.1]{kur:1971}, as is the case for the Moran process. The covariations process $[\bfM^{(N)}]_t \overset{d}{\to} 0$ as a consequence of \eqref{eq:covariation}, verified below.

\emph{(iii)} From \eqref{eq:w2}, $r_N[\bfw^{(N)}(\bfm) - \bfw(\bfm)] = O(N^{(\gb - 1)/2})$, again uniformly in $\bfm \in \gD_{KL-1}^{(N)}$, so here the appropriate choice is $\bfG_0 \equiv \bfzero$. Thus, the only relevant contribution to the limit \eqref{eq:middleterm} is from the error $\bfw(\bfM^{(N)}(s)) - \bfw(\bfM(s))$ rather than from $\bfw^{(N)}(\bfM^{(N)}(s)) - \bfw(\bfM^{(N)}(s))$.

\emph{(iv)} Jumps of any component of $\bfM^{(N)}$ are bounded in magnitude by $2/N$, so
\[
\sup_{s\leq t} r_N \left| \bfM^{(N)}(s) - \bfM^{(N)}(s-) \right| \leq N^{(1-\gb)/2}\cdot \frac{2\gL^{1/2}}{N} \to 0, \qquad N\to\infty,
\]
and \eqref{eq:jumpsize} holds. To identify the asymptotic behaviour of $r_N^2[\bfM^{(N)}]_t$, let
\[
\cN^{(N)}_\bfm(t) = \cY_\bfm\left(\int_0^t \gl^{(N)}_\bfm(\bfM^{(N)}(s)) \dd s\right)
\]
denote the total number of jumps of the Moran process into state $\bfm \in \gD_{KL-1}^{(N)}$ by time $t$, where $(\cY_\bfm : \bfm \in \gD_{KL-1}^{(N)})$ is a collection of independent Poisson processes of unit rate and $\gl_\bfm^{(N)}(\bfM^{(N)}(s))$ denotes the rate of transition of the process from current state $\bfM^{(N)}(s)$ to $\bfm$. Then
\begin{align*}
\MoveEqLeft r_N^2[\bfM^{(N)}]_t = N^{1-\gb}\int_0^t\sum_{\bfm \in \gD_{KL-1}^{(N)}} [\gD\bfM^{(N)}(s)][\gD\bfM^{(N)}(s)]' \dd\cN^{(N)}_\bfm(s),\\
&\sim N^{1-\gb}\int_0^t\sum_{\bfm \in \gD_{KL-1}^{(N)}} [\bfm-\bfM^{(N)}(s)][\bfm -\bfM^{(N)}(s)]' \gl_\bfm^{(N)}(\bfM^{(N)}(s)) \dd s,\\
&\sim \int_0^t \bfs^{(N)}(\bfM^{(N)}(s)) \dd s, % Not = here because of o(dt) terms
\end{align*}
by \eqref{eq:s}. Thus we may take $\bfG = \bfs$ in \eqref{eq:covariation} [$\bfG$ identifies the moments appearing in \eqref{eq:martingale-covariance}].
%We have successfully checked all the relevant requirements of Theorem 2.11 of \citet{kan:etal:2014}, whose conclusion gives the required result.
\end{proof}

\begin{remark}
One could obtain the same diffusion limit starting from a Wright-Fisher model rather than a Moran model, since the means and covariances of its increments are identical to leading order, up to a rescaling of time. This alternative approach is in some respects less appealing since the Wright-Fisher model, when expressed in continuous time, is non-Markovian. The additional complications raised by this approach have been addressed by \citet{nor:1975:SIAM} \citep[see also][]{eth:nag:1980, eth:nag:1988}, and we have checked that the conditions of his theorems still apply when we introduce recombination to the Wright-Fisher model. The theory of \citet{nor:1975:SIAM} has been used to study strong mutation and selection \citep{nor:1972, nor:1975:SIAM, kap:etal:1988, nag:1986, nag:1990, wak:sar:2009}, and a Gaussian diffusion approximation of a Moran model with strong selection is developed by \citet{fed:etal:2014}, but to the best of our knowledge this is the first time a central limit theorem has been obtained for strong recombination.
\end{remark}
\begin{remark}
The exponential decay of linkage disequilibrium implied by $\bfM$ [equation \eqref{eq:M}] is a classical result; the above theorem further quantifies the fluctuations about this deterministic behaviour in a fully time-dependent manner. In particular, the definition of $\bfU$ [equation \eqref{eq:mainrescaling}] shows that fluctuations are of order $N^{(1-\gb)/2}$ on a timescale of $N^{\gb-1}$ units of the Moran process. If we designate the expected lifetime of an individual, $2/N$, as one \emph{generation}, then these fluctuations can be said to occur on a timescale of order $N^\gb$ generations. (This definition of ``generation'' is consistent with $u_\A$, $u_\B$, and $r$ in \sref{sec:notation} provided we replace $N$ with the effective population size of the Moran model, $N/2$, in the definitions of $\tA$, $\tB$, and $\gr$ \citep[p121]{ewe:2004:I}.)
\end{remark}
%\item The covariance matrix $\bfgS(u,\bfm)$ is composed of a superposition of ``harmonics'', with different terms decaying at different exponential rates. This corresponds to the decomposition of the infinitesimal generator of the Wright-Fisher diffusion studied in \citet{jen:son:2012:AAP}.
\subsection{Stationary distribution}
Although $\bfU$ is described completely by \eqref{eq:U}, the volatility term $\bfgs(\bfM(t))$ is neither simple nor time-independent. On the other hand, our main interest is in stationary behaviour, and $\bfgs(\bfM(\infty))$ takes on a much simpler form. First note that the components of $\bfU(t)$ corresponding to each $X_i$ and $Y_j$ undergo Brownian motions (with nonunit volatility), so we restrict our attention to the stationary distribution of the component corresponding to $\bfD$, which we denote $\bfU_{\bfD}$. Conditions of \citet{nor:1975:AAP} %provides conditions similar to those of \thmref{thm:nor:1975} for convergence of a scalar diffusion to a stationary distribution.
confirm convergence of $\bfU_{\bfD}(t)$ to its stationary distribution. Setting $\bfgs(\bfM(s)) = \bfgs(\bfM(\infty))$ in \eqref{eq:U}, we find
\begin{equation}
\label{eq:OU}
\dd\bfU_{\bfD} = -\frac{\gr_\gb}{2}\bfU_{\bfD}\dd t + \bfgs_{\infty} \dd\bfW(s),
\end{equation}
where $\bfgs_{\infty}$ is a \emph{constant} defined by
\[
\bfgs_{\infty}\bfgs_{\infty}' = \bfs_\infty := \bfs_{\bfD\bfD}(\bfM(\infty)) = [X_i(0)Y_j(0)(\gd_{ik}-X_k(0))(\gd_{jl}-Y_l(0))]_{ij,kl}.
\]
The process \eqref{eq:OU} is much simpler to describe. Marginally, $\bfU_{D_{ij}}$ is an Ornstein-Uhlen\-beck process with damping towards linkage equilibrium at rate $\gr_\gb/2$ and constant volatility $[\bfgs_\infty]_{ij,ij}$. %The stationary distribution of such a process is $\text{Normal}(0,X_i(0)(1-X_i(0))Y_j(0)(1-Y_j(0))/\gr_\gb)$. %in agreement with equation (1.17) in \citet{nor:1975:AAP}.
$\bfU_{\bfD}$ has stationary distribution
\[
\text{Normal}\left(\bfzero_{KL\times 1}, \frac{\bfs_\infty}{\gr_\gb}\right).
\]
This is a slightly different idea of stationarity than usual, since it depends on $\bfX(0)$ and $\bfY(0)$. An immediate question is: what should be the distributions for $\bfX(0)$ and $\bfY(0)$? We address this by reconsidering the usual two-locus \emph{Wright-Fisher} diffusion limit operating on a slower timescale. We can exploit \eqref{eq:OU} to obtain a simple approximation of this diffusion limit, as follows. First, we have \emph{derived} the Gaussian diffusion approximation
\[
\bfD(0)e^{-\gr_\gb t/2} + N^{(\gb-1)/2}\bfU_{\bfD}(t)
\]
for $\bfD^{(N)}(t)$. %From \eref{eq:OUstationarity} the stationary distribution of this approximation is normally distributed with mean $\bfzero_{KL\times 1}$ and covariance $N^{\gb-1}\bfs_{\infty}/\gr_\gb = \bfs_{\infty}/\gr$.
Thus the stationary distribution of this approximation is
\begin{equation}
\label{eq:OUstationarity}
\text{Normal}\left(\bfzero_{KL\times 1}, \frac{\bfs_\infty}{\gr}\right).
\end{equation}
Notice that this description does not depend on the particular choice of $\gb$. Under the usual ``Wright-Fisher'' regime we treat $\gr$ as fixed. %Furthermore, it depends on $N$ only through the initial conditions $\bfX^{(N)}(0)$ and $\bfY^{(N)}(0)$. Letting $\bfX^{(N)}(0)\to\bfX(0)$ and $\bfY^{(N)}(0)\to\bfY(0)$ as $N\to\infty$ with $\gr$ fixed, we obtain a limiting diffusion $\bfV_{\bfD} = \lim_{N\to\infty}(N^{(\gb-1)/2}\bfV^{(N)}_{\widetilde{\bfD}}(t):t\geq 0)$ with stationary distribution
%\begin{equation}
%\label{eq:OUstationaritylimit}
%\text{Normal}\left(\bfzero_{KL\times 1}, \frac{1}{\gr}\left[X_i(0)Y_j(0)[\gd_{ik} - X_k(0)][\gd_{jl} - Y_l(0)]\right]_{ij,kl}\right).
%\end{equation}
It remains to specify the stationary distributions for the marginal allele frequencies $\bfX$ and $\bfY$, which we suppose to have reached their usual (independent) stationary distributions in the Wright-Fisher diffusion limit, which we refer to as $\pi_{\A}$ and $\pi_{\B}$, respectively (and whose respective sampling distributions are $q^\A$ and $q^\B$). Then we can complete the picture for \eref{eq:OUstationarity} by specifying $(\bfX(0),\bfY(0)) \sim \pi_{\A}\otimes \pi_{\B}$.

The distribution \eref{eq:OUstationarity} therefore provides a simple, explicit method for the approximate simulation of haplotype frequencies under a stationary, two-locus Wright-Fisher diffusion, which we summarize in the algorithm below. (When mutation is parent independent, as in Remark \ref{rem:jen:son:2009}, $\pi_{\A}$ and $\pi_{\B}$ take on a particularly simple form, but we note that these distributions are not known in general.)
\\

%\newpage
\fbox{
\begin{minipage}[c]{0.9\textwidth}
{\bf Algorithm to simulate from a Gaussian approximation to the stationary Wright-Fisher diffusion with recombination.}
\begin{enumerate}
\item Simulate marginal allele frequencies at locus A, $\bfX(0) \sim \pi_{\A}$.
\item Independently simulate marginal allele frequencies at locus B,\\ $\bfY(0) \sim \pi_{\B}$.
\item Conditionally simulate $\bfD$ from \eref{eq:OUstationarity} given $\bfX(0)$ and $\bfY(0)$.
\item Calculate two-locus haplotype frequencies via
\[
X_{ij} = D_{ij} + X_i(0)Y_j(0), \quad \text{for each }i\in[K], j\in[L].
\]
\end{enumerate}
\end{minipage}}
\\

\subsection{Sampling distribution} The significance of the Gaussian diffusion approximation $\bfU_{\bfD}$ is further evident from the following theorem. First we need some further notation. Let
\[
\partition{m} = \left\{\bfr\in \bbN^{K\times L}: \sum_{i=1}^K\sum_{j=1}^L r_{ij} = m\right\},
\]
for $m \in \bbN$, and let $\haplist{\bfr} \in ([K]\times [L])^m$ denote a sequence of $m$ haplotypes (in some arbitrary, fixed order) with multiplicities specified by $\bfr \in \partition{m}$. Further let $\haplistA{\bfr} \in [K]^m$ denote the corresponding list of alleles %at locus A
obtained by looking at the first entry of each element of $\haplist{\bfr}$, and define $\haplistB{\bfr}$ similarly. For $\gl \in \bbN$ denote by $\partitiontwo{2\gl}$ the set of partitions of $[2\gl]$ with precisely $\gl$ blocks of size $2$, and write a representative element as $\bfxi_{\bfmu\bfnu} = \{\{\mu_k,\nu_k\}: k=1,\ldots,\gl\}\in \partitiontwo{2\gl}$; $\bfmu = (\mu_k)$ and $\bfnu = (\nu_k)$ are sequences of length $\gl$. %and $\{\mu_k,\nu_k\}$ is a block of $\bfxi$.
For $J \subseteq [\gl]$, denote by $\bfmu_J$, $\bfnu_J$ the subsequences obtained by looking only at the indices in $J$, and denote by $\haplist{\bfr}_{\bfmu}$ the subsequence of $\haplist{\bfr}$ obtained by looking only at the indices in $\bfmu$. The matrix of multiplicities of $\haplist{\bfr}_{\bfmu}$ is denoted by $\bfr^{(\bfmu)}$, so that $\bfr^{(\bfmu)} + \bfr^{(\bfnu)} = \bfr$. For example, if $\bfr = [\begin{smallmatrix} 1 & 2\\ 0 & 1 \end{smallmatrix}]$ then a representative list of haplotypes is $\haplist{\bfr} = ((1,1),(1,2),(1,2),(2,2))$ with marginal allele lists $\haplistA{\bfr} = (1,1,1,2)$ and $\haplistB{\bfr} = (1,2,2,2)$. Here, $m = 2\gl = 4$, and $\partitiontwo{4} = \big\{\{\{1,2\},\{3,4\}\}, \{\{1,3\},\{2,4\}\}, \{\{1,4\},\{2,3\}\}\big\}$. Then for example the first element in $\partitiontwo{4}$ is the partition $\bfxi_{\bfmu\bfnu}$ constructed from $\bfmu = (1,3)$ and $\bfnu = (2,4)$, and so $\haplist{\bfr}_{\bfmu} = ((1,1),(1,2))$ and $\haplist{\bfr}_{\bfnu} = ((1,2),(2,2))$.

\begin{theorem}
\label{thm:gaussiandiffusion}
Suppose that $\bfX \sim \pi_{\A}$, $\bfY \sim \pi_{\B}$ independently, and conditional on $\bfX$ and $\bfY$, $\bfD$ is distributed according to the Gaussian distribution in \eqref{eq:OUstationarity}. Then the sampling distribution is given \emph{exactly} by
{\allowdisplaybreaks
\begin{align}
q_{G}(\bfa,\bfb,\bfc) = {}& \sum_{\gl=0}^{\lfloor c/2 \rfloor} \frac{1}{\gr^{\gl}}\sum_{\bfr\in \partition{2\gl}} \sum_{\bfxi\in \partitiontwo{2\gl}}\left[\prod_{i=1}^K \prod_{j=1}^L\binom{c_{ij}}{r_{ij}}\right]\notag\\
{}& \times \Bigg[\sum_{I \subseteq [\gl]:\,\haplistA{\bfr}_{\bfmu_I} = \haplistA{\bfr}_{\bfnu_I}} (-1)^{|I^\complement|}q^\A(\bfa + \bfc_\A-\bfr_\A^{(\bfnu_I)})\Bigg]\notag\\
& {}\times \Bigg[\sum_{J\subseteq [\gl]:\,\haplistB{\bfr}_{\bfmu_J}=\haplistB{\bfr}_{\bfnu_J}}(-1)^{|J^\complement|}q^\B(\bfb + \bfc_\B - \bfr_\B^{(\bfnu_J)})\Bigg], \label{eq:diffusionexact}\\
= {}& q_0(\bfa,\bfb,\bfc) + \frac{q_1(\bfa,\bfb,\bfc)}{\gr} + O\left(\frac{1}{\gr^2}\right), \notag
\end{align}}
with $q_0$ and $q_1$ given by \eref{eq:zerothorder} and \eref{eq:firstorder} respectively (and we impose the convention that the empty summations for $\gl = 0$ have a single term, with $(-1)^{|\varnothing\setminus\varnothing|} = 1$). %\comment{PJ}{Can this formula simplify further?}
\end{theorem}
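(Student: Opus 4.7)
The plan is to evaluate the Gaussian expectation directly by integrating out $\bfD$ using Wick's (Isserlis') theorem and then taking expectations over $\bfX$ and $\bfY$ against $\pi_\A$ and $\pi_\B$ to produce one-locus sampling distributions. Under the Gaussian approximation, a two-locus haplotype $(i,j)$ appears in the population with frequency $X_iY_j + D_{ij}$, while the locus-marginal haplotypes appear at frequencies $X_i$ and $Y_j$, so by sample exchangeability
\[
q_G(\bfa,\bfb,\bfc) = \bbE\Bigg[\prod_{i=1}^K X_i^{a_i}\prod_{j=1}^L Y_j^{b_j}\prod_{i,j}(X_iY_j + D_{ij})^{c_{ij}}\Bigg].
\]

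First, I would expand each $(X_iY_j + D_{ij})^{c_{ij}}$ binomially, parameterising the $D$-powers by a matrix $\bfr = (r_{ij})$ carrying weight $\prod_{i,j}\binom{c_{ij}}{r_{ij}}$. Conditional on $(\bfX,\bfY)$, $\bfD$ is centred multivariate Gaussian with covariance $\bfs_\infty/\gr$, so $\bbE[\prod_{i,j} D_{ij}^{r_{ij}}\mid\bfX,\bfY]$ vanishes unless $|\bfr|=\sum_{i,j}r_{ij}=2\gl$ is even. In that case, writing $\prod_{i,j} D_{ij}^{r_{ij}} = \prod_{k=1}^{2\gl} D_{(\haplist{\bfr})_k}$ for any fixed ordering $\haplist{\bfr}$ with multiplicities $\bfr$, Isserlis' theorem gives
\[
\bbE\Bigg[\prod_{k=1}^{2\gl} D_{(\haplist{\bfr})_k}\,\Big|\,\bfX,\bfY\Bigg] = \gr^{-\gl}\sum_{\bfxi_{\bfmu\bfnu}\in\partitiontwo{2\gl}}\prod_{k=1}^\gl [\bfs_\infty]_{(\haplist{\bfr})_{\mu_k},(\haplist{\bfr})_{\nu_k}},
\]
producing the $\gr^{-\gl}$ prefactor and the sum over pair partitions in \eqref{eq:diffusionexact}.

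Next, I would exploit the factorisation $[\bfs_\infty]_{(i,j),(k,l)} = X_iY_j(\gd_{ik}-X_k)(\gd_{jl}-Y_l)$, which decouples the $\A$- and $\B$-coordinates. For each of the $\gl$ pairs, expand $(\gd_{ik}-X_k)$ and $(\gd_{jl}-Y_l)$ as binary choices, indexed by subsets $I,J\subseteq [\gl]$ of pairs at which the Kronecker delta (rather than $-X$ or $-Y$) is chosen. The Kronecker deltas enforce $\haplistA{\bfr}_{\bfmu_I} = \haplistA{\bfr}_{\bfnu_I}$ and $\haplistB{\bfr}_{\bfmu_J} = \haplistB{\bfr}_{\bfnu_J}$, while the signs produce the factor $(-1)^{|I^\complement|+|J^\complement|}$. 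Collecting only the $X$-factors---the leftover $X_i^{c_{ij}-r_{ij}}$, the free $X_i^{a_i}$, the per-pair $X_{(\haplistA{\bfr})_{\mu_k}}$ for all $k$, and $X_{(\haplistA{\bfr})_{\nu_k}}$ for $k\notin I$---the total exponent of $X_i$ telescopes (using $\bfr^{(\bfmu)}+\bfr^{(\bfnu)}=\bfr$) to $(\bfa + \bfc_\A - \bfr^{(\bfnu_I)}_\A)_i$. Since $\bfX\sim \pi_\A$, taking expectation produces $q^\A(\bfa + \bfc_\A - \bfr^{(\bfnu_I)}_\A)$, with the analogous outcome for $\bfY$; assembling the $\A$- and $\B$-brackets recovers \eqref{eq:diffusionexact}.

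The Taylor expansion is then immediate: the $\gl=0$ summand is $q^\A(\bfa+\bfc_\A)q^\B(\bfb+\bfc_\B) = q_0(\bfa,\bfb,\bfc)$, and the $\gl=1$ summand yields $q_1(\bfa,\bfb,\bfc)/\gr$ after using Vandermonde's identity $\sum_{\bfr\in\partition{2}}\prod_{i,j}\binom{c_{ij}}{r_{ij}}=\binom{c}{2}$ together with its row- and column-marginal analogues to identify the four terms of \eqref{eq:firstorder}. The main obstacle I anticipate is the exponent bookkeeping in the previous paragraph: turning the per-pair indicator sums $\sum_k[(\haplistA{\bfr})_{\mu_k}=i]$, with and without Kronecker constraints, into the clean multiplicity $(\bfr^{(\bfnu_I)}_\A)_i$, and verifying en route that the construction is invariant to the arbitrary ordering used to list $\haplist{\bfr}$ and to the arbitrary labelling of the two members of each block of $\bfxi$ as $\mu_k$ versus $\nu_k$.
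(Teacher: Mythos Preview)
Your proposal is correct and follows essentially the same route as the paper: expand the sampling expectation, apply Isserlis' theorem to the conditional Gaussian in $\bfD$, use the product form $[\bfs_\infty]_{(i,j),(k,l)} = X_iY_j(\gd_{ik}-X_k)(\gd_{jl}-Y_l)$ to separate the $\A$- and $\B$-factors, expand each $(\gd-X)$ and $(\gd-Y)$ over subsets $I,J\subseteq[\gl]$, and then read off one-locus moments as $q^\A$ and $q^\B$. The paper handles the $\gl=1$ verification by writing the sum over ordered haplotype pairs $(k,l),(u,v)$ directly rather than invoking Vandermonde, but the computation is the same; your flagged bookkeeping concern (ordering of $\haplist{\bfr}$ and the $\mu$/$\nu$ labelling within each block) is real but harmless, since the covariance is symmetric and Isserlis' sum runs over unordered pair partitions.
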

\begin{proof}
With respect to the diffusion in the transformed co-ordinate system, the sampling distribution is
{\allowdisplaybreaks
\begin{align*}
q_{G}(\bfa,\bfb,\bfc) %={} & \bbE\left[\Bigg(\prod_{i=1}^K X_{i}^{a_i}\Bigg)\Bigg(\prod_{j=1}^L Y_{j}^{b_j}\Bigg)\Bigg(\prod_{i=1}^K \prod_{j=1}^L X_{ij}^{c_{ij}}\Bigg)\right],\\
={} & \bbE\left[\Bigg(\prod_{i=1}^K X_{i}^{a_i}\Bigg)\Bigg(\prod_{j=1}^L Y_{j}^{b_j}\Bigg)\Bigg(\prod_{i=1}^K \prod_{j=1}^L \left[D_{ij} + X_{i}Y_{j}\right]^{c_{ij}}\Bigg)\right], \\
	={} & \sum_{m=0}^c \sum_{\bfr\in \partition{m}} \left[\prod_{i=1}^K \prod_{j=1}^L\binom{c_{ij}}{r_{ij}}\right]
	\bbE\left[\Bigg(\prod_{i=1}^K X_{i}^{a_i+c_{i\cdot}-r_{i\cdot}}\Bigg)\right.\\
	& \specialcell{\hfill {}\times \left. \Bigg(\prod_{j=1}^L Y_{j}^{b_j+c_{\cdot j} - r_{\cdot j}}\Bigg) \bbE\left[\prod_{i=1}^K \prod_{j=1}^L D_{ij}^{r_{ij}}\mid \bfX, \bfY\right]\right],}\\
	={} & \sum_{\gl=0}^{\lfloor c/2\rfloor} \sum_{\bfr\in \partition{2\gl}} \sum_{\bfxi\in \partitiontwo{2\gl}}\left[\prod_{i=1}^K \prod_{j=1}^L\binom{c_{ij}}{r_{ij}}\right]\bbE\left[\Bigg(\prod_{i=1}^K X_{i}^{a_i+c_{i\cdot}-r_{i\cdot}}\Bigg)\right.\\
	& \specialcell{\hfill {}\times \left. \Bigg(\prod_{j=1}^L Y_{j}^{b_j+c_{\cdot j} - r_{\cdot j}}\Bigg) \prod_{k=1}^{\gl} \bbE[D_{\haplist{\bfr}_{\mu_k}}D_{\haplist{\bfr}_{\nu_k}}\mid \bfX, \bfY]\right],}\\
	={} & \sum_{\gl=0}^{\lfloor c/2 \rfloor} \frac{1}{\gr^{\gl}}\sum_{\bfr\in \partition{2\gl}} \sum_{\bfxi\in \partitiontwo{2\gl}}\left[\prod_{i=1}^K \prod_{j=1}^L\binom{c_{ij}}{r_{ij}}\right] \\
	& \times \bbE\left[\Bigg(\prod_{i=1}^K X_{i}^{a_i+c_{i\cdot}-r_{i\cdot}}\Bigg)\Bigg(\prod_{j=1}^L Y_{j}^{b_j+c_{\cdot j} - r_{\cdot j}}\Bigg)\right.\\
	& \specialcell{\hfill \left. {}\times \prod_{k=1}^{\gl}X_{\haplistA{\bfr}_{\mu_k}}Y_{\haplistB{\bfr}_{\mu_k}}(\gd_{\haplistA{\bfr}_{\mu_k} \haplistA{\bfr}_{\nu_k}} - X_{\haplistA{\bfr}_{\nu_k}})(\gd_{\haplistB{\bfr}_{\mu_k}\haplistB{\bfr}_{\nu_k}} - Y_{\haplistB{\bfr}_{\nu_k}})\right],}\\
	={}& \sum_{\gl=0}^{\lfloor c/2 \rfloor} \frac{1}{\gr^{\gl}}\sum_{\bfr\in \partition{2\gl}} \sum_{\bfxi\in \partitiontwo{2\gl}}\left[\prod_{i=1}^K \prod_{j=1}^L\binom{c_{ij}}{r_{ij}}\right] \\
	& {}\times \sum_{I \subseteq [\gl]} (-1)^{|I^\complement|}\gd_{\haplistA{\bfr}_{\bfmu_I}\haplistA{\bfr}_{\bfnu_I}}\sum_{J\subseteq [\gl]}(-1)^{|J^\complement|}\gd_{\haplistB{\bfr}_{\bfmu_J}\haplistB{\bfr}_{\bfnu_J}}\\
	& {} \times \bbE\left[\Bigg(\prod_{i=1}^K X_{i}^{a_i+c_{i\cdot}-r^{(\bfnu_I)}_{i\cdot}}\Bigg)\Bigg(\prod_{j=1}^L Y_{j}^{b_j+c_{\cdot j} - r^{(\bfnu_J)}_{\cdot j}}\Bigg) \right],
\end{align*}}
The second equality follows from the multinomial theorem and the tower property, the third equality follows from Isserlis' theorem \citep{mic:etal:2011}, %which expresses product moments of jointly Gaussian random variables of arbitrary order in terms of the second moments;
and the fourth equality follows from \eref{eq:OUstationarity}:
\[
\bbE[D_{ij}D_{kl}\mid \bfX, \bfY] = \frac{1}{\gr}X_iY_j(\gd_{ik} - X_k)(\gd_{jl} - Y_l).
\]
The fifth equality follows from expanding the final product (using the convention $\gd_{\varnothing\varnothing} = 1$), while \eref{eq:diffusionexact} follows from $(\bfX,\bfY) \sim \pi_\A \otimes \pi_\B$. The equalities still hold for $\gl = 0$ provided we take $\prod_\varnothing = 1$.

Extracting the two leading order terms $\gl=0$ and $\gl = 1$, the expression simplifies to
%\allowdisplaybreaks
\begin{align*}
q_{G}(\bfa,\bfb,\bfc) = {}& \bbE\left[\Bigg(\prod_{i=1}^K X_{i}^{a_i+c_{i\cdot}}\Bigg)\Bigg(\prod_{j=1}^L Y_{j}^{b_j+c_{\cdot j}}\Bigg)\right]\\
& {}+ \frac{1}{\gr}\sum_{k,u=1}^K\sum_{l,v=1}^L \frac{c_{kl}(c_{uv} - \gd_{ku}\gd_{lv})}{2} \bbE\left[\Bigg(\prod_{i=1}^K X_{i}^{a_i+c_{i\cdot}-\gd_{iu}}\Bigg)\right.\\
& \specialcell{\hfill \left. {}\times \Bigg(\prod_{j=1}^L Y_{j}^{b_j+c_{\cdot j} - \gd_{jv}}\Bigg) (\gd_{ku} - X_{u})(\gd_{lv} - Y_{v})\right] + O\left(\frac{1}{\gr^2}\right),}\\
= {}& q_0(\bfa,\bfb,\bfc) + \frac{q_1(\bfa,\bfb,\bfc)}{\gr} + O\left(\frac{1}{\gr^2}\right),
\end{align*}
as required.
\end{proof}

\subsection{Accuracy of the diffusion process}
A natural question to ask is: to what extent does the process of \thmref{thm:gaussiandiffusion} capture the dynamics of the full process? To address this we consider the accuracy of the sampling distribution \eref{eq:diffusionexact} as an approximation to the ``true'' distribution, $q(\bfa,\bfb,\bfc)$. For moderate sample sizes it is possible to compute the latter as the solution to a system of recursive equations \citep{gol:1984, eth:gri:1990, jen:son:2009:G}. The number of summands in \eref{eq:diffusionexact} grows rapidly with $\gl$ (as long as $\gl \leq \lfloor \frac{c}{2}\rfloor$), so we define an approximate sampling distribution $q_{\G}^{(\gl)}(\bfa,\bfb,\bfc)$ by truncating the outer sum in \eref{eq:diffusionexact} at a fixed index $\gl$. This is analogous to the asymptotic sampling formulae for the full model which are obtained by truncating equation \eref{eq:main} \citep{jen:son:2012:AAP}. As our measure of accuracy we define the relative error,
\begin{equation}
\label{eq:relativeerror}
\eG{\gl} = \left| \frac{Q_{\G}^{(\gl)}(\bfzero,\bfzero,\bfc) - q(\bfzero,\bfzero,\bfc)}{q(\bfzero,\bfzero,\bfc)}\right| \times 100\%,
\end{equation}
where $Q_{\G}^{(\gl)}(\bfzero,\bfzero,\bfc)$ is the staircase Pad\'e approximant to $q_{\G}^{(\gl)}(\bfzero,\bfzero,\bfc)$. \citep[The former is used for its superior convergence properties; see][for details.]{jen:son:2012:AAP} We define $\eT{\gl}$ analogously, replacing $Q_{\G}^{(\gl)}(\bfzero,\bfzero,\bfc)$ in \eref{eq:relativeerror} with the Pad\'e approximant to the partial sum of \eref{eq:main}, computed up to $O(\gr^{-(\gl+1)})$ by the method of \citet{jen:son:2012:AAP}.

We computed the distribution of $\eG{\gl}$ and of $\eT{\gl}$ across all sample configurations of size $c=20$ for which both alleles are observed at each locus; results are shown in \tref{tab:accuracy}. For a collection of this size it was straightforward to compute up to $\gl = 6$ for every possible sample configuration. Using a partial sum to approximate \eref{eq:main} contributes to both errors; $\eG{\gl}$ has additional contributions reflecting its use of an approximate \emph{model}. Of course, the two errors agree up to $\gl =1$. However, \tref{tab:accuracy} shows that they are comparable more broadly, particularly for large recombination rates. As $\gl$ increases, $Q_{\G}^{(\gl)}(\bfzero,\bfzero,\bfc)$ converges rapidly %to $q_{\G}(\bfzero,\bfzero,\bfc)$
(even without Pad\'e summation; not shown), and becomes a reasonable approximation to $q(\bfa,\bfb,\bfc)$. For example, for $\gr = 50$, $Q^{(6)}_{\G}(\bfzero,\bfzero,\bfc)$ is within $10\%$ of $q(\bfa,\bfb,\bfc)$ with probability $0.79$, though it is within $1\%$ only with probability $0.50$. When we consider the highest levels of accuracy, as in $\Phi(1)$ in \tref{tab:accuracy}, $\eG{\gl}$ actually increases with $\gl$ when $\gl > 1$. This suggests that the Gaussian model typically cannot approximate the true model to the same level of precision as a first order asymptotic approximation of the true model, though its behaviour as a coarser approximation (as reflected in the columns for  $\Phi(100)$, for example) is comparable.

%\comment{YSS}{For the Gaussian approximation, it seems that $\Phi(x)$ sometimes gets worse as $\lambda$ increases.  E.g., $\Phi(1)$ for all $\rho$ and $\Phi(10)$ for $\rho = 25$ and $50$.  We should probably address this point.}

\begin{table}[t]
\begin{center}
\caption{\label{tab:accuracy}Cumulative distribution $\Phi(x) = \bbP(\eApprox^{(\gl)} < x\%)$ (where $\eApprox^{(\gl)}$ denotes either $\eG{\gl}$ or $\eT{\gl}$ as defined in the main text), for all samples of size $20$ dimorphic at both loci.} 
\label{tab:main}
\begin{tabular}{ccccc|cccc}
& & \multicolumn{3}{c}{$\gr = 25$} & \multicolumn{3}{c}{$\gr = 50$}\\
\hline
	& Type&&&\\
$\gl$	& of sum & $\Phi(1)$ & $\Phi(10)$ & $\Phi(100)$& $\Phi(1)$ & $\Phi(10)$ & $\Phi(100)$ \\\hline
0	&	True	&	0.39	&	0.58	&	1.00	&	0.49	&	0.63	&	1.00	\\
	&	Gaussian	&	0.39	&	0.58	&	1.00	&	0.49	&	0.63	&	1.00	\\
1	&	True	&	0.51	&	0.75	&	0.96	&	0.59	&	0.84	&	0.99	\\
	&	Gaussian	&	0.51	&	0.75	&	0.96	&	0.59	&	0.84	&	0.99	\\
2	&	True	&	0.59	&	0.91	&	0.97	&	0.77	&	0.98	&	1.00	\\
	&	Gaussian	&	0.50	&	0.73	&	0.97	&	0.50	&	0.86	&	1.00	\\
%3	&	True	&	0.64	&	0.95	&	1.00	&	0.91	&	0.98	&	1.00	\\
%	&	Gaussian	&	0.51	&	0.73	&	0.98	&	0.50	&	0.81	&	1.00	\\
4	&	True	&	0.83	&	0.99	&	1.00	&	0.95	&	1.00	&	1.00	\\
	&	Gaussian	&	0.51	&	0.72	&	1.00	&	0.50	&	0.80	&	1.00	\\
%5	&	True	&	0.82	&	0.98	&	1.00	&	0.98	&	1.00	&	1.00	\\
%	&	Gaussian	&	0.50	&	0.72	&	0.99	&	0.50	&	0.80	&	1.00	\\
6	&	True	&	0.89	&	0.99	&	1.00	&	0.99	&	1.00	&	1.00	\\
	&	Gaussian	&	0.49	&	0.71	&	0.99	&	0.50	&	0.79	&	1.00	\\
%7	&	True	&	0.83	&	0.99	&	1.00	&	0.99	&	1.00	&	1.00	\\
%	&	Gaussian	&	0.49	&	0.71	&	0.99	&	0.50	&	0.79	&	1.00	\\
\hline\\[-5pt]
& & \multicolumn{3}{c}{$\gr = 100$} & \multicolumn{3}{c}{$\gr = 200$}\\
\hline
	& Type&&&\\
$\gl$	& of sum & $\Phi(1)$ & $\Phi(10)$ & $\Phi(100)$& $\Phi(1)$ & $\Phi(10)$ & $\Phi(100)$ \\\hline
0	&	True	&	0.50	&	0.72	&	1.00	&	0.54	&	0.95	&	1.00	\\
	&	Gaussian	&	0.50	&	0.72	&	1.00	&	0.54	&	0.95	&	1.00	\\
1	&	True	&	0.74	&	0.95	&	1.00	&	0.90	&	0.99	&	1.00	\\
	&	Gaussian	&	0.74	&	0.95	&	1.00	&	0.90	&	0.99	&	1.00	\\
2	&	True	&	0.95	&	1.00	&	1.00	&	1.00	&	1.00	&	1.00	\\
	&	Gaussian	&	0.64	&	0.99	&	1.00	&	0.85	&	1.00	&	1.00	\\
%3	&	True	&	0.99	&	1.00	&	1.00	&	1.00	&	1.00	&	1.00	\\
%	&	Gaussian	&	0.64	&	0.99	&	1.00	&	0.83	&	1.00	&	1.00	\\
4	&	True	&	1.00	&	1.00	&	1.00	&	1.00	&	1.00	&	1.00	\\
	&	Gaussian	&	0.64	&	0.99	&	1.00	&	0.83	&	1.00	&	1.00	\\
%5	&	True	&	1.00	&	1.00	&	1.00	&	1.00	&	1.00	&	1.00	\\
%	&	Gaussian	&	0.64	&	0.99	&	1.00	&	0.83	&	1.00	&	1.00	\\
6	&	True	&	1.00	&	1.00	&	1.00	&	1.00	&	1.00	&	1.00	\\
	&	Gaussian	&	0.64	&	0.99	&	1.00	&	0.83	&	1.00	&	1.00	\\
%7	&	True	&	1.00	&	1.00	&	1.00	&	1.00	&	1.00	&	1.00	\\
%	&	Gaussian	&	0.64	&	0.99	&	1.00	&	0.83	&	1.00	&	1.00	\\
\hline
\end{tabular}
\end{center}
\end{table}

%\newpage
\section{Coalescent process}
\label{sec:coalescent}
\subsection{A coupling argument}
In this section we derive a coalescent process which is much simpler than the ARG but whose sampling distribution agrees with \eref{eq:zerothorder} and \eref{eq:firstorder}. We first provide an informal description. Let $\cC^{(\gr)}_{a,b,c}(t)$ denote the standard, neutral, two-locus coalescent process a time $t$ back from a sample taken at time $t=0$, with $a$, $b$, and $c$ counting the three types of sample as defined in \sref{sec:notation}. % members of the sample observed only at locus A, $b$ members of the sample observed only at locus B, and $c$ members of the sample observed at both loci.
Recombination occurs at the usual rate of $\gr c/2$, where $\gr = 2Nr$. Lineages ancestral to the three types %of sample
are sometimes referred to as representing \emph{left half-fragments}, \emph{right half-fragments}, and \emph{full fragments}, respectively. Our strategy is to define a coupling on a joint probability space for the pair of processes $(\cC^{(\gr)} = (\cC^{(\gr)}_{a,b,c}(t):t \geq 0), \cD^{(\infty)} = (\cD^{(\infty)}_{a,b,c}(t)): t\geq 0))$, where $\cD^{(\infty)}$ is a simple process closely related to $\cC^{(\infty)}$ and defined below. $\cC^{(\gr)}(\go )$ is said to be coupled to $\cD^{(\infty)}(\go )$ if the two realizations have the same marginal coalescent tree at locus A and the same marginal coalescent tree at locus B. Since it is the marginal trees which govern the mutation process at each locus, coupled processes therefore have the same sampling distribution. (There should be no ambiguity arising from the fact that our coupling is not on pairs of realizations but on pairs of equivalence classes, where an equivalence class of $\cC^{(\gr)}$ or of $\cD^{(\infty)}$ is a set of realizations with the same marginal tree at locus A and the same marginal tree at locus B.)

A complete description of a coalescent process is one taking values in partitions of $[n]$, as introduced by \citet{kin:1982:SPA}, with natural extensions to incorporate recombination. We opt instead to represent $\cC^{(\gr)}$ only by its \emph{ancestral} process; that is, as a birth-death process on the \emph{number} of each type of lineage. Such a process is studied in depth by \citet{eth:gri:1990} and \citet{gri:1991}. In what follows it is understood implicitly that for any given realization of the ancestral process one could reconstruct a complete coalescent process---an ARG---given some additional independent randomness. Provided the ancestral processes of $\cC^{(\gr)}$ and $\cD^{(\infty)}$ remain coupled, then it is also always possible to couple their respective \emph{coalescent} processes. For example, a decrease by one in the ancestral process corresponds to a coalescence event in the coalescent process, which can be realized by merging two uniformly chosen blocks in the partition of $[n]$. A coupling of two \emph{ancestral} processes lets us couple the corresponding \emph{coalescent} processes if we always pick the same pair of blocks to merge in the two processes. With this kept in mind, it is sufficient for the argument developed below to consider the simpler ancestral process representation.

Recall the two-locus ancestral process for the coalescent with recombination: Going backwards in time, each pair of lineages coalesces independently at rate $1$, and each lineage ancestral at both loci recombines at rate $\gr/2$. When two lineages coalesce, they are replaced with a single lineage, and this lineage is ancestral at a given locus if either of its two progenitors were ancestral at this locus. Thus for example, with $a$, $b$, and $c$ defined as above the total rate of coalescence involving one left-half fragment and one right-half fragment is $ab$, resulting in a transition of the form $(a,b,c) \mapsto (a-1,b-1,c+1)$. The remaining transitions are given in \tref{tab:coupling}. We can now make the following concise definition.

\begin{definition}
The ancestral process $\cC^{(\gr)} = (\cC^{(\gr)}_{a,b,c}(t):t \geq 0)$ is a continuous-time Markov process on $\bbN^4$ such that $\cC^{(\gr)}_{a,b,c}(0) = (a,b,c,c)$ a.s., and with infinitesimal generator
\begin{align}
\sL f(a,b,c,c) = {} & \frac{\gr c}{2}f(a+1,b+1,c-1,c-1) + \binom{c}{2}f(a,b,c-1,c-1) \notag\\
& {}+ R_{a,b,c,c}\sG f(a,b,c,c) - \left[\frac{\gr c}{2} + \binom{c}{2} + R_{a,b,c,c}\right]f(a,b,c,c), \label{eq:genrho}
\end{align}
where %$R_{a,b,c}$ is a rate that does not depend on $\gr$ and $\sG f(a,b,c)$ is a linear combination of terms in $f(a',b',c')$ with coefficients summing to $1$. In the neutral two-locus coalescent process,
\begin{align*}
R_{a,b,c,d} = {}& ab + ac + bd + \binom{a}{2} + \binom{b}{2},\\
\sG f(a,b,c,d) = {}& \frac{1}{2R_{a,b,c,d}}[2abf(a-1,b-1,c+1,d+1) \\
& {}+ a(a+2c-1)f(a-1,b,c,d) + b(b+2d-1)f(a,b-1,c,d)],
\end{align*}
and $f: \bbN^4 \to \bbR$ is an appropriate test function.
\end{definition}
Regard the third and fourth entries in $f$ as the number of left- and right- halves of full fragments; these entries are always equal. This representation is seemingly redundant, but it will make the coupling with the corresponding process $\cD^{(\infty)}$ (for which we allow $c\neq d$) transparent. We will define $\cD^{(\infty)}$ via the following recipe. First, take $\cC^{(\gr)}$ and let $\gr\to\infty$. Ordinarily, $\cC^{(\infty)}_{a,b,c}(0)$ moves instantaneously to the state $\cC^{(\infty)}_{a+c,b+c,0}(0+)$ and evolves thereafter according to $\sL f(a+c,b+c,0,0)$. However, our second step is to make a notational change: we reuse the third and fourth entries of $f$ by separately tracking the half-fragment lineages that \emph{originated} as full fragments: we write it as a process initiated at $(a,b,c,c)$ and evolving according to the generator
\begin{multline}
\sL^{(\infty)} f(a,b,c,d) = \binom{c}{2}f(a,b,c-1,d) + \binom{d}{2}f(a,b,c,d-1)\\
{}+ R_{a,b,c,d}\sG f(a,b,c,d) - \left[\binom{c}{2} + \binom{d}{2} + R_{a,b,c,d}\right]f(a,b,c,d).\label{eq:geninf}
\end{multline}
%, even though the process evolves as $a+c$ left half-fragments and $b+d$ right half-fragments.
Third, we %would like to maintain the left half of a full fragment as distinct only as far back as the time it undergoes recombination in the coupled process $\cC^{(\gr)}$. We therefore
introduce an \emph{artificial} recombination process which induces transitions of the form $(a,b,c,c) \mapsto (a+1,b+1,c-1,c-1)$ at rate $\gr c/2$. This does not reflect any concrete evolutionary dynamic but merely acts as a mathematical device to facilitate a coupling between the two processes. (As a minor technical detail, we should like to allow the process ultimately to reach a state of the form $(a,b,0,0)$. We therefore make a minor adjustment, below, to this artificial process to allow for it to act even if one of $c$ or $d$ is $0$.) We therefore have the following definition.

\begin{definition}
The ancestral process $\cD^{(\infty)} = (\cD^{(\infty)}_{a,b,c}(t):t \geq 0)$ is a continuous-time Markov process on $\bbN^4$ such that $\cD^{(\infty)}_{a,b,c}(0) = (a,b,c,c)$ a.s., and with infinitesimal generator
\begin{multline}
\sH^{(\infty)} f(a,b,c,d) := \sL^{(\infty)} f(a,b,c,d) \\
{}+ \frac{\gr \max\{c,d\}}{2}[f(a+\bbI\{c > 0\},b+\bbI\{d > 0\},c-\bbI\{c > 0\},d-\bbI\{d > 0\}) \\{}- f(a,b,c,d)],\label{eq:geninf2}
\end{multline}
where $f: \bbN^4 \to \bbR$ is an appropriate test function.
\end{definition}
Transitions of this process are also summarized in \tref{tab:coupling}, and henceforth we will refer to the numberings of each type of transition given in the table. It is important to keep in mind that although $\gr$ appears as a parameter in \eqref{eq:geninf2}, the process $\cD^{(\infty)}$ acts as if the two loci are independent. The process with rate depending on $\gr$ is simply an artificial relabelling of lineages. A key observation is that this artificial process does not affect the distribution of the marginal coalescent trees, so $\cC^{(\infty)}$ and $\cD^{(\infty)}$ have the same sampling distribution.

\begin{table}[t]
\begin{center}
\caption{\label{tab:coupling}Transition rates of events in the two ancestral processes $\cC^{(\gr)}$ and $\cD^{(\infty)}$.} 

\begin{tabular}{cc|cc}
\hline
& Transition & \multicolumn{2}{c}{Rate}\\
Type & $(a,b,c,d) \mapsto $& $\cC^{(\gr)}$ %& $\cC^{(\infty)}$ 
& $\cD^{(\infty)}$\\
\hline
\i & $(a,b,c-1,d-1)$ & $c(c-1)/2^*$ %& $0$ 
& $0$\\
\ii & $(a,b,c-1,d)$ & $0$ %& $c(c-1)/2$ 
& $c(c-1)/2$ \\
\iii & $(a,b,c,d-1)$ & $0$ %& $d(d-1)/2$ 
& $d(d-1)/2$ \\
\iv & $(a-1,b,c,d)$ & $a(a+2c-1)/2$ %& $a(a+2c-1)/2$
& $a(a+2c-1)/2$ \\
\v & $(a,b-1,c,d)$ & $b(b+2d-1)/2$ %& $b(b+2d-1)/2$ 
& $b(b+2d-1)/2$ \\
\vi & $(a-1,b-1,c+1,d+1)$ & $ab$ %& $ab$ 
& $ab$\\
\vii & $(a+\bbI\{c > 0\},b+\bbI\{d > 0\},$~~ \\ & ~~$c-\bbI\{c > 0\},d-\bbI\{d > 0\})$ & $\gr c/2^*$ %& $\infty$ 
& $\gr \max\{c,d\}/2$\\
%\vi & $(a+1,b+1,c-1,d-1)$ & $\gr c/2^*$ %& $\infty$ 
%& $\gr c/2$\\
\hline
\end{tabular}
\end{center}
$^*${\small Defined only when $c = d$.}
\end{table}

To summarize, we have defined two Markov processes on $\bbN^4$, $\cC^{(\gr})$ and $\cD^{(\infty)}$, %(\cC^{(\gr)}_{a,b,c}(t) : t\geq 0)$ and $(\cD^{(\infty)}_{a,b,c}(t) : t \geq 0)$,
which describe two-locus ancestral processes going backwards in time and with respective generators $\sL$ and $\sH^{(\infty)}$. $\sL$ is the generator of a standard process with recombination parameter $\gr$. $\sH^{(\infty)}$ is the generator of a standard process with recombination parameter $\infty$ and with the additional properties that left half-fragments are recorded in two categories (of multiplicity $a$ and $c$), right half-fragments are recorded in two categories (of multiplicity $b$ and $d$), and there is an artificial movement of pairs from the latter to the former as if they were still full fragments. This somewhat contrived definition has an important advantage: it is a simple matter to attempt to couple the two processes by matching each kind of event in the two generators whenever possible. A recombination event in $\cC^{(\gr)}_{a,b,c}(t)$ can be matched by an artificial recombination event in $\cD^{(\infty)}_{a,b,c}(t)$, a coalescence of type \iv{} %$(a,b,c,c) \mapsto (a-1,b,c,c)$
in $\cC^{(\gr)}_{a,b,c}(t)$ can be matched by a coalescence of type \iv{} in $\cD^{(\infty)}_{a,b,c}(t)$, and so on.

The aforementioned description is a probabilistic coupling, which may or may not succeed since not all events can be paired off in this way. Comparing \eref{eq:genrho} and \eref{eq:geninf2}, we see that a coupling will fail if there is a type \i{} transition %$(a,b,c,c) \mapsto (a,b,c-1,c-1)$
in $\cC^{(\gr)}$ or if there is a type \ii{} or type \iii{} transition %either of the transitions $(a,b,c,c)\mapsto (a,b,c-1,c)$ or $(a,b,c,c)\mapsto (a,b,c,c-1)$ occurs
in $\cD^{(\infty)}$. %The first type of event corresponds to a coalescence of two full fragments in $\cC^{(\gr)}$, which cannot be matched in a coupled process with no full fragments of its own; and the other types of event correspond to a coalescence of two half-fragments in $\cD^{(\infty)}$ which cannot be matched by the corresponding \emph{full} fragments in $\cC^{(\gr)}$.
Define the failure times
\begin{align*}
T^{(1)}_{a,b,c} &:= \inf\{t \geq 0: \cC^{(\gr)}_{a,b,c}(t) = \cC^{(\gr)}_{a,b,c}(t-) - (0,0,1,1)\},\\
T^{(2)}_{a,b,c} &:= \inf\{t \geq 0: \cD^{(\infty)}_{a,b,c}(t) = \cD^{(\infty)}_{a,b,c}(t-) - (0,0,1,0)\},\\
T^{(3)}_{a,b,c} &:= \inf\{t \geq 0: \cD^{(\infty)}_{a,b,c}(t) = \cD^{(\infty)}_{a,b,c}(t-) - (0,0,0,1)\},
\end{align*}
and
\begin{multline*}
T^{\text{\tiny MRCA}}_{a,b,c} := \inf\Big\{t \geq 0: \cC^{(\gr)}_{a,b,c}(s) = \cD^{(\infty)}_{a,b,c}(s) \quad\forall s \leq t, \\ \cC^{(\gr)}_{a,b,c}(t) \in \{(1,1,0,0),(0,0,1,1)\}\Big\},
\end{multline*}
the first time that both loci find a most recent common ancestor in the coupled processes (with the convention $\inf \varnothing = \infty$). If $T^{\text{\tiny MRCA}}_{a,b,c} < \min\{T^{(1)}_{a,b,c}$, $T^{(2)}_{a,b,c}$, $T^{(3)}_{a,b,c}\}$, we say that the coupling has been \emph{successful}. We are now in a position to verify the observation made in \sref{sec:intro}: that we need consider whether or not a coupling has been successful only as far back as the first time that no lineages ancestral to both loci survive. For if we reach this point then, even further back in time,  jointly ancestral lineages may arise again temporarily (with $c \geq 1$), but the coupling can fail only in the unlikely [i.e.\ $O(\gr^{-2})$] event that $c \geq 2$. We formalize this argument in the following lemma.

\begin{lemma}
\label{lem:c=0}
If $c \in\{0,1\}$, the coupling between $\cC^{(\gr)}$ and $\cD^{(\infty)}$ fails with probability $O(\gr^{-2})$, as $\gr\to\infty$.
\end{lemma}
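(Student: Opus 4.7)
The plan is to exploit the fact that the only coupling-failure events---types \i, \ii, and \iii---all have rate zero whenever $c$ and $d$ are each at most $1$. Under the natural coupling every type \iv, \v, \vi, or \vii{} event in $\cC^{(\gr)}$ is matched event-for-event with its namesake in $\cD^{(\infty)}$ (the rates agree exactly), so before any failure the two processes share the common state $(a,b,c,c)$, in particular $c=d$. The failure rates $c(c-1)/2$, $c(c-1)/2$, and $d(d-1)/2$ at such a state all vanish while $c,d \leq 1$. Hence coupling failure is impossible until the shared coordinate $c$ first reaches $2$.

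The core of the argument is a two-stage rarity estimate giving two multiplicative $O(\gr^{-1})$ factors. From $(a,b,1,1)$ the only transitions that change $c$ are type \vi{} (rate $ab=O(1)$, sending $c$ up) and type \vii{} (rate $\gr/2$, sending $c$ down), so each visit of $c$ to the value $1$ has probability at most $ab/(ab+\gr/2) = O(\gr^{-1})$ of subsequently reaching $c=2$. Once $c=2$ is attained, the total failure rate is $\tfrac32 c(c-1) = 3$ while the competing rate of escape via type \vii{} is $\gr c/2 = \gr$, so the conditional probability of failure before the process escapes back to $c<2$ is again $O(\gr^{-1})$.

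The main technical obstacle is to show that the expected number of visits of $c$ to state $1$ before the MRCA is bounded by a $\gr$-independent constant; otherwise a naive union bound across visits could only deliver $O(\gr^{-1})$. I would bound this count by observing that each such visit must be initiated by a type \vi{} event from a state with $c=0$, whose rate $ab$ is uniformly bounded by a $\gr$-independent constant (since $a+b+2c$ is non-increasing: only types \iv{} and \v{} strictly decrease it, while the matched types \vi{} and \vii{} leave it unchanged), and that the expected time to MRCA is $O(1)$ uniformly in $\gr$ (coalescences occur on their standard timescale). Equivalently, one packages the whole calculation as the integrated-hazard estimate
\[
\bbP(\text{failure}) \leq \bbE\!\left[\int_0^{T^{\text{\tiny MRCA}}_{a,b,c}}\!\left\{c(s)(c(s)-1) + \tfrac{1}{2}\,d(s)(d(s)-1)\right\}\dd s\right],
\]
and verifies that the expected total time spent at $c\geq 2$ is $O(\gr^{-2})$ by combining the constant-order bound on the number of entries into $\{c\geq 2\}$ with the $O(\gr^{-1})$ per-sojourn duration (due to type \vii{} firing there at rate at least $\gr$). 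Multiplying the two per-visit $O(\gr^{-1})$ estimates by the $\gr$-independent bound on the number of visits then yields $\bbP(\mathrm{failure}) = O(\gr^{-2})$.
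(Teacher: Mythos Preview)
Your proposal is correct and follows the same two-stage rarity argument as the paper: coupling failure requires first reaching $c=2$ (one $O(\gr^{-1})$ factor from the type \vi{} vs.\ type \vii{} competition at $c=1$) and then experiencing a failure event before type \vii{} fires again (a second $O(\gr^{-1})$ factor). The paper's proof states this more tersely and does not explicitly control the number of re-entries into $\{c\geq 1\}$; your treatment of that visit count via the non-increasing quantity $a+b+2c$ and the $\gr$-independent bound on the expected MRCA time is a useful rigorisation of what the paper takes as implicit.
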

\begin{proof}
%While the coupling is successful, $c=d$.
The three events causing the coupling to fail occur at rates proportional to $\binom{c}{2}$ and thus require $c \geq 2$. For the pair $(\cC^{(\gr)}_{a,b,1}, \cD^{(\infty)}_{a,b,1})$, we therefore first need to see a transition of the form $(a',b',1,1) \mapsto (a'-1,b'-1,2,2)$ for some $a',b'$, followed by one of the transitions causing the coupling to fail. Reading off the rates from the generators, each of these transitions occurs with probability $O(\gr^{-1})$. The case $c=0$ is similar, first needing a transition of the form $(a',b',0,0) \mapsto (a'-1,b'-1,1,1)$ whose probability is of $O(1)$.
\end{proof}
%\begin{remark}
%\lemmaref{lem:c=0} provides a coalescent explanation for the fact that $q_1(\bfa,\bfb,\bfzero) = 0$.
%\end{remark}
\begin{lemma}
\label{lem:coupling}
The coupling between $\cC^{(\gr)}$ and $\cD^{(\infty)}$ fails with the following probabilities:
\begin{equation}
\label{eq:failures}
\bbP(I^{(k)}) = \frac{1}{\gr}\binom{c}{2} + O\left(\frac{1}{\gr^2}\right) \quad \text{as }\gr\to\infty, \quad k = 1,2,3,
\end{equation}
where $I^{(k)} := \{T^{(k)}_{a,b,c} < T^{\text{\tiny MRCA}}_{a,b,c}\}$. Moreover, $\bbP(I^{(k_1)} \cap I^{(k_2)}) = O(\gr^{-2})$ for $k_1 \neq k_2$.
\end{lemma}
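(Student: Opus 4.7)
The plan is to perform a first-step analysis on the embedded discrete-time jump chain, exploiting a separation of timescales: type~\vii{} recombination fires at rate $O(\gr)$, whereas the three failure events (types~\i{}, \ii{}, \iii{}) and the slow coalescences (types~\iv{}, \v{}, \vi{}) all fire at $O(1)$ rates. Thus each step of the embedded chain is, with probability $1-O(\gr^{-1})$, a matched type~\vii{} recombination that decreases $c$ (and $d$) by one while preserving the coupling; the three failure events appear only as rare perturbations.

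To compute $\bbP(I^{(1)})$, I would read off the rates in \tref{tab:coupling} to see that, starting from $(a,b,c,c)$ with $c \geq 2$, the probability the next event in $\sC^{(\gr)}$ is of type~\i{} equals
\[
\frac{\binom{c}{2}}{\gr c/2 + \binom{c}{2} + R_{a,b,c,c}} = \frac{c-1}{\gr} + O(\gr^{-2}),
\]
while types~\iv{}--\vi{} contribute $O(\gr^{-1})$ per step in total, and each such event can contribute at most $O(\gr^{-2})$ to $\bbP(I^{(1)})$ since a subsequent failure must follow (note that type~\vi{} raises $c$ by one, but this is absorbed by the same reasoning). Writing $p_c$ for $\bbP(I^{(1)})$ starting with $c$ full fragments, a first-step decomposition gives the recursion
\[
p_c = \frac{c-1}{\gr} + p_{c-1} + O(\gr^{-2}),
\]
with base case $p_0 = p_1 = O(\gr^{-2})$ supplied by \lemmaref{lem:c=0} (since $I^{(1)}$ implies the coupling fails). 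Iterating yields $p_c = \sum_{k=2}^c (k-1)/\gr + O(\gr^{-2}) = \binom{c}{2}/\gr + O(\gr^{-2})$. The arguments for $\bbP(I^{(2)})$ and $\bbP(I^{(3)})$ in $\sD^{(\infty)}$ are essentially identical: prior to any failure the coupling preserves $c=d$, so types~\ii{} and~\iii{} each fire at rate $\binom{c}{2}$, the artificial recombination fires at rate $\gr\max\{c,d\}/2 = \gr c/2$, and the same recursion applies.

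For the joint bound $\bbP(I^{(k_1)}\cap I^{(k_2)}) = O(\gr^{-2})$, I would observe that two failures require two distinct embedded steps at each of which an $O(\gr^{-1})$ event occurs. Since the total number of embedded steps before joint MRCA is $O(a+b+c)$ uniformly in $\gr$, a union bound over ordered pairs of failure steps yields the claim directly. The main obstacle throughout is keeping the bookkeeping honest: tracking the $O(\gr^{-1})$ corrections from types~\iv{}--\vi{} between consecutive recombinations and verifying that they do not leak into the leading order, as well as handling cleanly the states with $c \leq 1$. The uniform boundedness of all non-\vii{} rates in $\gr$, together with \lemmaref{lem:c=0} as a base case, is the key tool.
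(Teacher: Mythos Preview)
Your proposal is correct and follows essentially the same approach as the paper: a first-step decomposition/induction on $c$ using \lemmaref{lem:c=0} as the base case, with the crude $O(\gr^{-1})$ bound to absorb the types~\iv{}--\vi{} branches, and the observation that two non-recombination transitions are needed for the joint event. The only cosmetic difference is that the paper keeps the $(a,b)$-dependence explicit in $\bbP(T^{(1)}_{a,b,c} < U^{(1)}_{a,b,c})$ rather than writing $p_c$; your one loose phrase is that the number of embedded steps before MRCA is ``$O(a+b+c)$'' --- it is random, not deterministic, but has expectation bounded uniformly in $\gr$, which is all the union bound needs.
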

\begin{proof}
For $k=1$, by \lemmaref{lem:c=0} it is enough to show that
\[
\bbP(T^{(1)}_{a,b,c} < U^{(1)}_{a,b,c}) = \frac{1}{\gr}\binom{c}{2} + O\left(\frac{1}{\gr^2}\right),
\]
where
\begin{align*}
U^{(1)}_{a,b,c} := {}& \inf\left\{t \geq 0: %\cC^{(\gr)}_{a,b,c}(s) = \cD^{(\infty)}_{a,b,c}(s) \quad\forall s \leq t,\right.\\
%&  \phantom{\inf\Big\{t \geq 0: } \left.
\cC^{(\gr)}_{a,b,c}(t) \in \{(a',b',0,0) : a',b'\in \bbN\}\right\}
\end{align*}
is the first time $\cC^{(\gr)}$ reaches $c=0$. We proceed by induction on $c$; \lemmaref{lem:c=0} provides the base cases $c\in\{0,1\}$. First note that for any $c \geq 1$,
\begin{equation}
\label{eq:T1U1}
\bbP(T^{(1)}_{a,b,c} < U^{(1)}_{a,b,c}) = O\left(\frac{1}{\gr}\right),
\end{equation}
since this event requires at least one transition that is not a recombination. Reading off the relevant probabilities from \eref{eq:genrho}, we have for $c \geq 2$:
\begin{align*}
\bbP(T^{(1)}_{a,b,c} < U^{(1)}_{a,b,c}) = {}& \frac{\frac{\gr c}{2}}{\frac{\gr c}{2} + \binom{c}{2} + R_{a,b,c,c}}\cdot\bbP(T^{(1)}_{a+1,b+1,c-1} < U^{(1)}_{a+1,b+1,c-1}) \\
& {}+ \frac{ab}{\frac{\gr c}{2} + \binom{c}{2} + R_{a,b,c,c}}\cdot \bbP(T^{(1)}_{a-1,b-1,c+1} < U^{(1)}_{a-1,b-1,c+1})\\
& {}+ \frac{\binom{c}{2}}{\frac{\gr c}{2} + \binom{c}{2} + R_{a,b,c,c}}\cdot 1+ O\left(\frac{1}{\gr^{2}}\right),\\
= {}& \frac{1}{\gr}\binom{c}{2} + O\left(\frac{1}{\gr^2}\right),
\end{align*}
by the inductive hypothesis for the first term on the right and using \eref{eq:T1U1} for the second term. By considering
\begin{align*}
U^{(k)}_{a,b,c} := {}& \inf\left\{t \geq 0: %\cC^{(\gr)}_{a,b,c}(s) = \cD^{(\infty)}_{a,b,c}(s) \quad\forall s \leq t,\right.\\
%&  \phantom{\inf\Big\{t \geq 0: } \left.
\cD^{(\infty)}_{a,b,c}(t) \in \{(a',b',0,0) : a',b'\in \bbN\}\right\}, \quad k=2,3,
\end{align*}
the cases $k=2,3$ are similar. $\bbP(I^{(k_1)} \cap I^{(k_2)}) = O(\gr^{-2})$ also follows from the fact that this event requires at least two transitions which are not recombinations during the time that $c > 0$.
\end{proof}

Should the coupling fail, we can say much about the sequence of events prior to $U^{(k)}_{a,b,c}$. Intuitively, the probability that \emph{more than} one transition other than recombinations occurs is $O(\gr^{-2})$. To make this precise we denote by $\cS^{(k)}_{a,b,c}(t)$ the jump chain up to time $t$ of $\cC^{(\gr)}$ if $k=1$ and of $\cD^{(\infty)}$ if $k=2, 3$.%; that is, the finite sequence of pairs of states visited by $((\cC^{(\gr)}_{a,b,c}(s), \cD^{(\infty)}_{a,b,c}(s)): s \in [0,t])$.
\begin{lemma}
\label{lem:skeleton}
Let $\sS_{a,b,c}$ denote the set of jump chains comprising sequences which start at $(a,b,c,c)$, end at the first entry of the form $(a',b',0,0)$, $a',b'\in\bbN$, and with all transitions corresponding to recombination events, except for possibly one transition. Then
\[
\bbP(\cS^{(k)}_{a,b,c}(U^{(k)}_{a,b,c}) \in \sS_{a,b,c} \mid I^{(k)}) = 1 - O\left(\frac{1}{\gr}\right) \quad \text{as }\gr\to\infty, \quad k=1,2,3.
\]
% Note that if $k=1$ then the sequence is of length $c$, while if $k=2,3$ the sequence is of length $c+1$.
\end{lemma}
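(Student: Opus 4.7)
The plan is to bound the complementary event: conditional on $I^{(k)}$, the trajectory up to $U^{(k)}_{a,b,c}$ contains two or more non-recombination transitions. Writing $A_{\geq 2}$ for the (unconditional) event that the jump chain $\cS^{(k)}_{a,b,c}$ up to $U^{(k)}_{a,b,c}$ has at least two non-recombination jumps, the goal reduces to showing $\bbP(A_{\geq 2}) = O(\gr^{-2})$, since \lemmaref{lem:coupling} already provides $\bbP(I^{(k)}) = \gr^{-1}\binom{c}{2} + O(\gr^{-2})$ and
\[
\bbP\bigl(\cS^{(k)}_{a,b,c}(U^{(k)}_{a,b,c}) \notin \sS_{a,b,c} \,\big|\, I^{(k)}\bigr) = \bbP(A_{\geq 2} \mid I^{(k)}) \leq \frac{\bbP(A_{\geq 2})}{\bbP(I^{(k)})}.
\]

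The key observation is a conservation law: inspection of \tref{tab:coupling} shows that $a+b+c+d$ is invariant under every type \vii{} (genuine or artificial) recombination transition, and strictly decreases by exactly $1$ at every other transition. Consequently the process remains confined, up to time $U^{(k)}_{a,b,c}$, to a finite set of states with total size bounded by $n := a+b+2c$, and the total rate of non-recombination transitions is uniformly bounded by some constant $C_n$ depending only on $n$. At any visited state with $c+d \geq 1$ the type \vii{} rate is at least $\gr/2$, so the embedded probability that the next jump is a non-recombination is at most $2C_n/\gr$.

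I would then apply the strong Markov property at the (random) time $\tau$ of the first non-recombination transition, if any occurs before $U^{(k)}_{a,b,c}$. An induction on $a+b+c+d$ mirroring the one used in the proof of \lemmaref{lem:coupling}, with the role previously played by the specific failure transition replaced by ``any non-recombination transition,'' shows that $\bbP(\tau < U^{(k)}_{a,b,c}) = O(\gr^{-1})$ uniformly over starting configurations bounded by $n$. Conditional on this event and on the post-$\tau$ state, which is again bounded by $n$, applying the very same bound afresh gives probability $O(\gr^{-1})$ of a second non-recombination jump occurring before $U^{(k)}_{a,b,c}$. Multiplying delivers $\bbP(A_{\geq 2}) = O(\gr^{-2})$.

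The principal technical obstacle is that $c$ and $d$ can increase along the trajectory through type \vi{} coalescences, so an induction on $c$ (or on $c+d$) alone would break down; the correct substitute is $a+b+c+d$, whose monotonicity under non-recombination transitions both confines the chain to a finite, $n$-bounded state space and makes the uniform-in-state estimate $O(\gr^{-1})$ for a single failure propagate cleanly to $O(\gr^{-2})$ for two failures. The three cases $k=1,2,3$ are handled identically after replacing $\cC^{(\gr)}$ with $\cD^{(\infty)}$ and adjusting the relevant rates in \tref{tab:coupling}.
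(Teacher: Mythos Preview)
Your overall strategy is sound and mirrors the paper's very brief proof: both arguments reduce to showing that, beyond the single failure event at $T^{(k)}_{a,b,c}$, any additional non-recombination jump before $U^{(k)}_{a,b,c}$ has probability $O(\gr^{-1})$. The paper simply asserts this by ``inspection of the generators''; you unpack it via the unconditional bound $\bbP(A_{\geq 2}) = O(\gr^{-2})$ together with $\bbP(I^{(k)})\asymp \gr^{-1}$ from \lemmaref{lem:coupling}, which is a legitimate and arguably more careful route.

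There are, however, two technical slips in your execution. First, your conservation law is misstated: from \tref{tab:coupling}, a type~\vi{} transition $(a,b,c,d)\mapsto(a-1,b-1,c+1,d+1)$ \emph{preserves} $a+b+c+d$, and a type~\i{} transition decreases it by $2$, so the sum does not ``strictly decrease by exactly~$1$'' at every non-recombination jump. What remains true, and is all you need, is that $a+b+c+d$ is \emph{non-increasing}, which indeed confines the process to a finite $n$-bounded state space and yields your uniform bound $C_n$ on non-recombination rates. Second, an induction on $a+b+c+d$ cannot ``mirror'' the one in \lemmaref{lem:coupling}, because the dominant type~\vii{} transition leaves this quantity unchanged; the induction in \lemmaref{lem:coupling} is on $c$ (respectively $\max\{c,d\}$), and that is the correct variable here too, since type~\vii{} decreases it. The concern you raise about type~\vi{} increasing $c$ is handled exactly as in \lemmaref{lem:coupling}: that branch carries a coefficient $O(\gr^{-1})$ and is absorbed using the a~priori bound, not by the inductive hypothesis. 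Alternatively, one can bypass induction entirely: along a path consisting only of type~\vii{} jumps there are at most $\max\{c,d\}\leq n$ steps before $U^{(k)}_{a,b,c}$, and a union bound over these steps gives $\bbP(\tau < U^{(k)}_{a,b,c}) \leq n\cdot 2C_n/\gr = O_n(\gr^{-1})$ directly. With either fix, your strong-Markov step goes through and $\bbP(A_{\geq 2}) = O(\gr^{-2})$ follows.
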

\begin{proof}
The non-recombination event causing $I^{(k)}$ occurs at time $T^{(k)}_{a,b,c}$. Inspection of the generators \eref{eq:genrho} and \eref{eq:geninf2} shows that any further transition other than a recombination occurs with probability $O(\gr^{-1})$ during the time that $c > 0$.
\end{proof}
Recall that our purpose is to obtain the sampling distribution for $\cC^{(\gr)}$. For successful couplings, this is easy to obtain since it is the same as that of $\cD^{(\infty)}$ and hence $\cC^{(\infty)}$; thus $\cC^{(\gr)}\mid I^{(1)\complement}$ has the same sampling distribution as $\cD^{(\infty)}\mid (I^{(2)} \cup I^{(3)})^\complement$. Even if the coupling fails, Lemmata \ref{lem:c=0} and \ref{lem:skeleton}, demonstrate that the behaviour of $\cC^{(\gr)}$ is still predictable enough to recover its sampling distribution up to $O(\gr^{-2})$. Roughly [up to $O(\gr^{-2})$], \lemmaref{lem:skeleton} says: if there is an event that causes the coupling to fail then this is the \emph{only} non-recombination event in the failing process before $U^{(k)}_{a,b,c}$; by \lemmaref{lem:c=0}, if it has not failed by $U^{(k)}_{a,b,c}$ then the coupling will not fail after $U^{(k)}_{a,b,c}$.

The following theorem is proven in \citet{jen:son:2009:G}; however, the following proof gives a coherent, \emph{process-level} explanation for the result.

\begin{theorem}
\label{thm:coupling}
Expressing the sampling distribution for $(\cC^{(\gr)}_{a,b,c}(t) : t\geq 0)$ as in \eref{eq:main}, the first two terms are given by \eref{eq:zerothorder} and \eref{eq:firstorder}.
\end{theorem}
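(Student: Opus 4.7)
My plan is to express the difference $q^{\cC^{(\gr)}} - q^{\cD^{(\infty)}}$ as a sum of contributions from the three uncoupled failure events $I^{(1)}, I^{(2)}, I^{(3)}$. On the successful coupling event $A := I^{(1)\complement}\cap I^{(2)\complement}\cap I^{(3)\complement}$ the two processes produce identical marginal trees at each locus, hence identical conditional sampling probabilities $P_{\cC^{(\gr)}} = P_{\cD^{(\infty)}}$, and so, using $\bbP(I^{(k_1)}\cap I^{(k_2)}) = O(\gr^{-2})$ from \lemmaref{lem:coupling},
\[
q^{\cC^{(\gr)}}(\bfa,\bfb,\bfc) = q^{\cD^{(\infty)}}(\bfa,\bfb,\bfc) + \sum_{k=1}^3 \bbP(I^{(k)})\bigl[q^{\cC^{(\gr)}}(\cdot\mid I^{(k)}) - q^{\cD^{(\infty)}}(\cdot\mid I^{(k)})\bigr] + O(\gr^{-2}).
\]
Because the marginal trees at loci A and B under $\cD^{(\infty)}$ are \emph{independent} Kingman coalescents (the artificial recombination mechanism merely relabels lineages), the moment formulation of the sampling distribution factorises and $q^{\cD^{(\infty)}}(\bfa,\bfb,\bfc) = q^\A(\bfa+\bfc_\A)q^\B(\bfb+\bfc_\B) = q_0$.

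To evaluate the conditional sampling distributions to leading order, I would invoke \lemmaref{lem:c=0} and \lemmaref{lem:skeleton}: conditional on $I^{(k)}$, with probability $1-O(\gr^{-1})$ the only non-recombination event in the relevant process before $c$ reaches $0$ is the single type-$k$ event, which occurs on the fast time scale $O(\gr^{-1})$ and hence acts at time $0$ in the $\gr\to\infty$ limit. For Type \i{}, two of the $c$ full-fragment lineages of $\cC^{(\gr)}$ coalesce, forcing the two corresponding haplotypes to share alleles at \emph{both} loci; by exchangeability the coalescing pair is uniform among the $\binom{c}{2}$ possible pairs of full fragments, and only the $\binom{c_{ij}}{2}$ pairs of common type $(i,j)$ are compatible with the observed configuration, giving
\[
q^{\cC^{(\gr)}}(\bfa,\bfb,\bfc \mid I^{(1)}) = \frac{1}{\binom{c}{2}}\sum_{i=1}^K\sum_{j=1}^L \binom{c_{ij}}{2}\,q^\A(\bfa+\bfc_\A-\bfe_i)\,q^\B(\bfb+\bfc_\B-\bfe_j),
\]
while $q^{\cD^{(\infty)}}(\cdot\mid I^{(1)}) = q_0 + O(\gr^{-1})$ since $\cD^{(\infty)}$ is undisturbed by this event. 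For Types \ii{} and \iii{} the event occurs in $\cD^{(\infty)}$ and forces a pair of originally-full lineages to coalesce in only one marginal tree---for Type \ii{}, the locus-A tree, so the shared ancestry is only at locus A---yielding
\[
q^{\cD^{(\infty)}}(\bfa,\bfb,\bfc\mid I^{(2)}) = \frac{q^\B(\bfb+\bfc_\B)}{\binom{c}{2}}\sum_{i=1}^K\binom{c_{i\cdot}}{2}\,q^\A(\bfa+\bfc_\A-\bfe_i),
\]
together with the symmetric expression for Type \iii{}, and $q^{\cC^{(\gr)}}(\cdot\mid I^{(k)}) = q_0 + O(\gr^{-1})$ for $k=2,3$. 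Substituting $\bbP(I^{(k)}) = \binom{c}{2}/\gr + O(\gr^{-2})$ from \lemmaref{lem:coupling}, the three $\binom{c}{2}q_0$ contributions appear with signs $-,+,+$ and combine to the $+\binom{c}{2}q_0$ term of \eref{eq:firstorder}, while the remaining three terms of \eref{eq:firstorder} are contributed exactly one from each failure type.

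The main obstacle is the rigorous justification of the ``coalescence-at-time-$0$'' heuristic that produces the conditional sampling distributions: I would need to show that mutations accumulated along lineages during the fast pre-$U^{(k)}$ phase contribute only $O(\gr^{-2})$ to $q^{\cC^{(\gr)}}$, and that the identity of the pair of lineages undergoing the type-$k$ event is genuinely uniform over the $\binom{c}{2}$ available pairs---a consequence of the symmetry of the initial configuration together with the fact that recombinations preserve exchangeability of lineages within each class. Once these two ingredients are in hand, the remainder is a bookkeeping exercise in which each localised forced coalescence decreases the appropriate marginal sampling distribution $q^\A$ or $q^\B$ by one sample position in precisely the way encoded in \eref{eq:firstorder}.
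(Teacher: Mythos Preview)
Your argument is correct and uses the same coupling machinery as the paper (Lemmata~\ref{lem:c=0}--\ref{lem:skeleton}), but you organize the final decomposition differently. The paper conditions asymmetrically,
\[
q(\bfa,\bfb,\bfc) = \bbP(I^{(1)})\,q_{\cC^{(\gr)}\mid I^{(1)}} + \bbP(I^{(1)\complement})\,q_{\cD^{(\infty)}\mid (I^{(2)}\cup I^{(3)})^\complement},
\]
and then recovers $q_{\cD^{(\infty)}\mid (I^{(2)}\cup I^{(3)})^\complement}$ from $q_{\cD^{(\infty)}} = q_0$ by subtracting the $I^{(2)}$ and $I^{(3)}$ contributions. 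Your symmetric identity $q^{\cC^{(\gr)}}-q^{\cD^{(\infty)}}=\sum_k \bbP(I^{(k)})\bigl[q^{\cC^{(\gr)}}(\cdot\mid I^{(k)})-q^{\cD^{(\infty)}}(\cdot\mid I^{(k)})\bigr]+O(\gr^{-2})$ leads to the same four terms of $q_1$, but at the cost of needing the ``cross-conditional'' evaluations $q^{\cD^{(\infty)}}(\cdot\mid I^{(1)})=q_0+O(\gr^{-1})$ and $q^{\cC^{(\gr)}}(\cdot\mid I^{(k)})=q_0+O(\gr^{-1})$ for $k=2,3$. These are true---the first because the marginal trees of $\cD^{(\infty)}$ are independent Kingman coalescents regardless of the artificial labelling, and the type-\i{} clock is a ghost for $\cD^{(\infty)}$; the second by the same Lemma~\ref{lem:skeleton} reasoning applied to $\cC^{(\gr)}$ on an event where its own failure type has conditional probability $O(\gr^{-1})$---but the paper's route sidesteps them entirely. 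The two ``obstacles'' you flag (mutations during the fast phase, and uniformity of the coalescing pair) are handled in the paper exactly as you outline: the former by the $O(\gr^{-1})$ bound implicit in Lemma~\ref{lem:skeleton}, the latter by exchangeability.
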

\begin{proof}
Denote by $q_{\cC^{(\gr)}\mid I^{(1)}}(\bfa,\bfb,\bfc)$ the sampling distribution of the process $\cC^{(\gr)}\mid I^{(1)}$. By Lemmata \ref{lem:c=0} and \ref{lem:skeleton}, this sampling distribution is obtained up to $O(\gr^{-1})$ by picking a pair of full fragments at random to coalesce, with the remaining $c-1$ fragments all undergoing recombination, and subsequently running the process as $\cD^{(\infty)}_{a+c-1,b+c-1,0} (\stackrel{a.s.}{=} \cC^{(\infty)}_{a+c-1,b+c-1,0})$. Hence,
\begin{align}
q_{\cC^{(\gr)}\mid I^{(1)}}(\bfa,\bfb,\bfc) &= \si\sj \frac{\binom{c_{ij}}{2}}{\binom{c}{2}} q_{\cC^{(\infty)}}(\bfa,\bfb,\bfc - \bfe_{ij}) + O\left(\frac{1}{\gr}\right),\notag\\
&= \si\sj \frac{\binom{c_{ij}}{2}}{\binom{c}{2}} q^\A(\bfa+\bfc_\A-\bfe_i)q^\B(\bfb+\bfc_\B-\bfe_j) + O\left(\frac{1}{\gr}\right). \label{eq:qI1}
\end{align}
(We can also ignore the possibility of mutation prior to $U^{(1)}_{a,b,c}$ since, by the same argument as in \lemmaref{lem:skeleton}, a mutation occurs during this phase with probability $O(\gr^{-1})$.) Similarly,
\allowdisplaybreaks
\begin{align}
q_{\cD^{(\infty)}\mid I^{(2)}}(\bfa,\bfb,\bfc) &= \si \frac{\binom{c_{i\cdot}}{2}}{\binom{c}{2}} q_{\cC^{(\infty)}}(\bfa+\bfc_\A-\bfe_i,\bfb+\bfc_\B,\bfzero) + O\left(\frac{1}{\gr}\right),\notag\\
&= \si \frac{\binom{c_{i\cdot}}{2}}{\binom{c}{2}} q^\A(\bfa+\bfc_\A-\bfe_i)q^\B(\bfb+\bfc_\B) + O\left(\frac{1}{\gr}\right),\label{eq:qI2}\\
q_{\cD^{(\infty)}\mid I^{(3)}}(\bfa,\bfb,\bfc) &= \sj \frac{\binom{c_{\cdot j}}{2}}{\binom{c}{2}} q_{\cC^{(\infty)}}(\bfa+\bfc_\A,\bfb+\bfc_\B-\bfe_j,\bfzero) + O\left(\frac{1}{\gr}\right),\notag\\
&= \sj \frac{\binom{c_{\cdot j}}{2}}{\binom{c}{2}} q^\A(\bfa+\bfc_\A)q^\B(\bfb+\bfc_\B-\bfe_j) + O\left(\frac{1}{\gr}\right), \label{eq:qI3}
\end{align}
and so, together with \lemmaref{lem:coupling} %\eref{eq:failures}
and the observation that
\begin{multline*}
\bbP([I^{(2)} \cup I^{(3)}]^\complement)q_{\cD^{(\infty)}\mid (I^{(2)} \cup I^{(3)})^\complement}(\bfa,\bfb,\bfc) = q_{\cD^{(\infty)}}(\bfa,\bfb,\bfc)\\ {}- \bbP(I^{(2)})q_{\cD^{(\infty)}\mid I^{(2)}}(\bfa,\bfb,\bfc) -  \bbP(I^{(3)})q_{\cD^{(\infty)}\mid I^{(3)}}(\bfa,\bfb,\bfc) + O(\gr^{-2}),
\end{multline*}
we obtain
\begin{multline}
q_{\cD^{(\infty)}\mid (I^{(2)} \cup I^{(3)})^\complement}(\bfa,\bfb,\bfc) = \left[1 + \frac{2}{\gr}\binom{c}{2}\right]\bigg[q_{\cD^{(\infty)}}(\bfa,\bfb,\bfc)\\
{}- \frac{1}{\gr}\binom{c}{2}q_{\cD^{(\infty)}\mid I^{(2)}}(\bfa,\bfb,\bfc) -\frac{1}{\gr}\binom{c}{2}q_{\cD^{(\infty)}\mid I^{(3)}}(\bfa,\bfb,\bfc)\bigg] + O\left(\frac{1}{\gr^{2}}\right). \label{eq:qnotI2notI3}
\end{multline}
The key decomposition is then
\begin{align}
q(\bfa,\bfb,\bfc) = {}& \bbP(I^{(1)})q_{\cC^{(\gr)}\mid I^{(1)}}(\bfa,\bfb,\bfc) + \bbP(I^{(1)\complement})q_{\cC^{(\gr)}\mid I^{(1)\complement}}(\bfa,\bfb,\bfc) \notag\\
= {}&  \bbP(I^{(1)})q_{\cC^{(\gr)}\mid I^{(1)}}(\bfa,\bfb,\bfc)
{}+ \bbP(I^{(1)\complement})q_{\cD^{(\infty)}\mid (I^{(2)}\cup I^{(3)})^\complement}(\bfa,\bfb,\bfc) \label{eq:keydecomp}\\
={} & q_0(\bfa,\bfb,\bfc) + \frac{1}{\gr}q_1(\bfa,\bfb,\bfc) + O\left(\frac{1}{\gr^2}\right), \notag
\end{align}
using \eref{eq:failures}, \eref{eq:qI1}, \eref{eq:qI2}, \eref{eq:qI3}, and \eref{eq:qnotI2notI3}, with $q_0$, $q_1$ given by \eref{eq:zerothorder} and \eref{eq:firstorder}, respectively.
\end{proof}
\begin{remark}
It may be possible to use similar arguments to obtain a genealogical interpretation of the second-order term, $q_2$ in \eqref{eq:main}; for example, genealogies with \emph{two} events that cause the coupling to fail would surely contribute. However, as is clear from the expression for $q_2$ given in \citet{jen:son:2009:G, jen:son:2010:AAP}, this is not a simple endeavour and it is seems difficult to interpret some of the components of $q_2$.
\end{remark}
\subsection{A new ``loose-linkage'' coalescent process}
Equation \eref{eq:keydecomp} tells us that, up to $O(\gr^{-2})$, we can obtain the correct sampling distribution using the mixture
\[
\ga[\cC^{(\gr)}\mid I^{(1)}] + (1-\ga)[\cD^{(\infty)}\mid (I^{(2)}\cup I^{(3)})^\complement], \quad \ga = \frac{1}{\gr}\binom{c}{2},
\]
provided $\ga < 1$. The coupling used to prove \thmref{thm:coupling} demonstrates that we can \emph{define} a simple stochastic process for weakly correlated loci, $\cE^{(\gr)}$, as follows, whose sampling distribution agrees with \eref{eq:zerothorder} and \eref{eq:firstorder} up to $O(\gr^{-2})$.\\

\fbox{
\begin{minipage}[c]{0.9\textwidth}
{\bf Algorithm to simulate $\cE^{(\gr)}$, the \emph{loose-linkage coalescent}.}
\begin{enumerate}
\item With probability $\ga$, %sample from $\cC^{(\gr)}\mid I^{(1)}$ up to time $U^{(1)}_{a,b,c}$, and then from $\cC^{(\infty)}$ (with the appropriate starting configuration) beyond $U^{(1)}_{a,b,c}$. The first step can be simplified further by
choose a pair uniformly at random from the $c$ full fragments to coalesce, and then choose uniformly from the chains in $\sS_{a,b,c}$ compatible with $I^{(1)}$. Such chains are some permutation of a sequence corresponding to this sole coalescence and $c-1$ recombinations.  Inter-event \emph{times} up to $U^{(1)}_{a,b,c}$ can be sampled according to the rates specified in \eref{eq:genrho}. Go to step 3.
\item Otherwise (w.p.\ $1-\ga$), sample from $\cD^{(\infty)}\mid (I^{(2)} \cup I^{(3)})^\complement$ up to time $U^{(2)}_{a,b,c}$ ($= U^{(3)}_{a,b,c}$), which can be achieved by running $\cD^{(\infty)}$ as usual according to \eref{eq:geninf2} but banning transitions of the form $(a,b,c,d) \mapsto (a,b,c-1,d)$ and $(a,b,c,d) \mapsto (a,b,c,d-1)$. (The rates of these transitions still contribute to the overall rate governing inter-event times, however.) Go to step 3.
\item Beyond time $U^{(k)}_{a,b,c}$ ($k = 1$ in the first case above and $k=2$ in the second), construct the remainder of the process independently using $(\cC^{(\infty)}(t-U^{(k)}_{a,b,c}):t\geq U^{(k)}_{a,b,c})$ (with the appropriate starting configuration) back to the first time both loci have found a most recent common ancestor.
\end{enumerate}
\end{minipage}}
\\
%Preferable to step 1 would be to sample from $\cC^{(\gr)}\mid I^{(1)}$ with probability $\ga$, whose higher order terms would be closer to the true process. However, it is not clear how to achieve this except via some sort of rejection step.

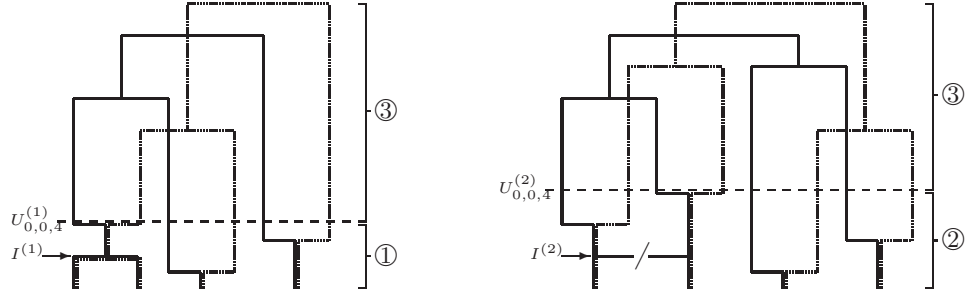
\begin{figure}[t]
\centering
\setlength{\unitlength}{0.42cm}
\begin{picture}(12,9)(-2,0)
\thicklines
\put(0,0){\line(0,1){1}}
\put(2,0){\line(0,1){1}}
\put(4,0){\line(0,1){0.5}}
\put(7,0){\line(0,1){1.5}}
\put(0,1){\line(1,0){2}}
\put(1,1){\line(0,1){1}}
\put(3,0.5){\line(1,0){1}}
\put(3,0.5){\line(0,1){5.5}}
\put(6,1.5){\line(1,0){1}}
\put(6,1.5){\line(0,1){6.5}}
\put(0,2){\line(1,0){1}}
\put(0,2){\line(0,1){4}}
\put(0,6){\line(1,0){3}}
\put(1.5,6){\line(0,1){2}}
\put(1.5,8){\line(1,0){4.5}}

\multiput(0.1,0)(0,0.1){10}{\line(0,1){0.05}}
\multiput(2.1,0)(0,0.1){10}{\line(0,1){0.05}}
\multiput(4.1,0)(0,0.1){5}{\line(0,1){0.05}}
\multiput(7.1,0)(0,0.1){15}{\line(0,1){0.05}}
\multiput(0.1,0.9)(0.1,0){20}{\line(1,0){0.05}}
\multiput(1.1,1)(0,0.1){10}{\line(0,1){0.05}}
\multiput(4.1,0.5)(0.1,0){10}{\line(1,0){0.05}}
\multiput(5.1,0.5)(0,0.1){45}{\line(0,1){0.05}}
\multiput(7.1,1.5)(0.1,0){10}{\line(1,0){0.05}}
\multiput(8.1,1.5)(0,0.1){75}{\line(0,1){0.05}}
\multiput(1.1,2)(0.1,0){10}{\line(1,0){0.05}}
\multiput(2.1,2)(0,0.1){30}{\line(0,1){0.05}}
\multiput(2.1,5)(0.1,0){30}{\line(1,0){0.05}}
\multiput(3.6,5)(0,0.1){40}{\line(0,1){0.05}}
\multiput(3.6,9)(0.1,0){45}{\line(1,0){0.05}}

\thinlines

%\put(10,0){\vector(0,1){3}}
%\put(10.1,1.5){\tiny $t$}

\multiput(-0.5,2.1)(0.5,0){19}{\line(1,0){0.25}}
\put(-2,2){\tiny $U^{(1)}_{0,0,4}$}
\put(-2,0.9){\tiny $I^{(1)}$}
\put(-1,1){\vector(1,0){0.9}}
%\put(9.5,5.5){\tiny $\cC^{(\infty)}_{3,3,0}(t-U^{(1)}_{0,0,4})$}
\put(9.5,5.4){\raisebox{.5pt}{\textcircled{\raisebox{-.9pt} {3}}}}
\put(9,2.1){\line(1,0){0.25}}
\put(9,9){\line(1,0){0.25}}
\put(9.25,5.6){\line(1,0){0.125}}
\put(9.25,2.1){\line(0,1){6.9}}
\put(9.5,0.8){\raisebox{.5pt}{\textcircled{\raisebox{-.9pt} {1}}}}
\put(9,0){\line(1,0){0.25}}
\put(9,2){\line(1,0){0.25}}
\put(9.25,1){\line(1,0){0.125}}
\put(9.25,0){\line(0,1){2}}

\end{picture}
\hspace{35pt}
\begin{picture}(14.5,9)(-3,0)
\thicklines
\put(0,0){\line(0,1){2}}
\put(3,0){\line(0,1){3}}
\put(6,0){\line(0,1){0.5}}
\put(9,0){\line(0,1){1.5}}
\put(2,3){\line(1,0){1}}
\put(2,3){\line(0,1){3}}
\put(5,0.5){\line(1,0){1}}
\put(5,0.5){\line(0,1){6.5}}
\put(8,1.5){\line(1,0){1}}
\put(8,1.5){\line(0,1){5.5}}
\put(5,7){\line(1,0){3}}
\put(6.5,7){\line(0,1){1}}
\put(-1,2){\line(1,0){1}}
\put(-1,2){\line(0,1){4}}
\put(-1,6){\line(1,0){3}}
\put(0.5,6){\line(0,1){2}}
\put(0.5,8){\line(1,0){6}}

\multiput(0.1,0)(0,0.1){20}{\line(0,1){0.05}}
\multiput(3.1,0)(0,0.1){30}{\line(0,1){0.05}}
\multiput(6.1,0)(0,0.1){5}{\line(0,1){0.05}}
\multiput(9.1,0)(0,0.1){15}{\line(0,1){0.05}}
\multiput(0.1,2)(0.1,0){10}{\line(1,0){0.05}}
\multiput(1.1,2)(0,0.1){50}{\line(0,1){0.05}}
\multiput(2.6,7)(0,0.1){20}{\line(0,1){0.05}}
\multiput(2.6,9)(0.1,0){60}{\line(1,0){0.05}}
\multiput(6.1,0.5)(0.1,0){10}{\line(1,0){0.05}}
\multiput(7.1,0.5)(0,0.1){45}{\line(0,1){0.05}}
\multiput(9.1,1.5)(0.1,0){10}{\line(1,0){0.05}}
\multiput(10.1,1.5)(0,0.1){35}{\line(0,1){0.05}}
\multiput(3.1,3)(0.1,0){10}{\line(1,0){0.05}}
\multiput(4.1,3)(0,0.1){40}{\line(0,1){0.05}}
\multiput(1.1,7)(0.1,0){30}{\line(1,0){0.05}}
\multiput(7.1,5)(0.1,0){30}{\line(1,0){0.05}}
\multiput(8.6,5)(0,0.1){40}{\line(0,1){0.05}}

\put(0,1){\line(1,0){1.25}}
\put(3,1){\line(-1,0){1.25}}
\put(1.32,0.9){\tiny $\cancel{\phantom{\times}}$}

\thinlines
\multiput(-1.5,3.1)(0.5,0){24}{\line(1,0){0.25}}
\put(-3,3){\tiny $U^{(2)}_{0,0,4}$}
\put(-2,0.9){\tiny $I^{(2)}$}
\put(-1,1){\vector(1,0){0.9}}
%\put(11,6){\tiny $\cC^{(\infty)}_{4,4,0}(t-U^{(2)}_{0,0,4})$}
\put(11,5.9){\raisebox{.5pt}{\textcircled{\raisebox{-.9pt} {3}}}}
\put(10.5,3.1){\line(1,0){0.25}}
\put(10.5,9){\line(1,0){0.25}}
\put(10.75,6.1){\line(1,0){0.125}}
\put(10.75,3.1){\line(0,1){5.9}}
%\put(11,1.5){\tiny $\cD^{(\infty)}_{0,0,4}(t)\mid (I^{(2)}\cup I^{(3)})^\complement$}
\put(11,1.3){\raisebox{.5pt}{\textcircled{\raisebox{-.9pt} {2}}}}
\put(10.5,0){\line(1,0){0.25}}
\put(10.5,3){\line(1,0){0.25}}
\put(10.75,1.5){\line(1,0){0.125}}
\put(10.75,0){\line(0,1){3}}
\end{picture}

\caption{\label{fig:looselinkage}Sampling from the loose-linkage coalescent, $\cE^{(\gr)}$, from an initial configuration $(0,0,4)$. Steps of the algorithm in the main text are denoted by circled numbers. \emph{Left}: Commence from step 1 (probability $\ga$). Step 1 samples from an approximation to \mbox{$\cC^{(\gr)}\mid I^{(1)}$} which is correct to $O(\gr^{-2})$, back as far as time $U_{0,0,4}^{(1)}$. The jump chain sampled here is $\cS^{(1)}_{0,0,4}(U^{(1)}_{0,0,4}) = ((0,0,4,4),(1,1,3,3),(1,1,2,2),(2,2,1,1),(3,3,0,0))$. Thereafter (step 3) the sample is constructed from $\cC^{(\infty)}_{3,3,0}(t-U^{(1)}_{0,0,4})$. \emph{Right}: Commence from step 2 (probability $1-\ga$). Step 2 samples from $\cD^{(\infty)}_{0,0,4}(t)\mid (I^{(2)}\cup I^{(3)})^\complement$; a transition which would cause $I^{(2)}$ is banned. Thereafter (step 3) the sample is constructed from $\cC^{(\infty)}_{4,4,0}(t-U^{(2)}_{0,0,4})$.}
\end{figure}

An example is shown in \fref{fig:looselinkage}. Simulation and inference under $\cE^{(\gr)}$ should be straightforward, since its dynamics are little more complicated than those of a coalescent process with $\gr = \infty$. Unlike our diffusion process of \sref{sec:diffusion}, it does not seem easy to write down its sampling distribution to all orders in closed-form, since that of $\cD^{(\infty)}\mid (I^{(2)}\cup I^{(3)})^\complement$ is not so obvious.

\section{Discussion}
\label{sec:discussion}
We have described two novel stochastic models of evolution for loosely linked, or weakly correlated, loci, using both diffusion- and coalescent-based arguments. As a consequence we have obtained deep insight into the simple form of the asymptotic sampling formula given by \eref{eq:zerothorder} and \eref{eq:firstorder}. Our diffusion model is based on a central limit theorem for density dependent population processes, %approach of \citet{nor:1975:SIAM},
which may be viewed as a separation of the timescales $N^\gb$ and $N$ (in generations), for $0 < \gb < 1$, and pioneered in population genetics by \citet{nor:1975:SIAM}. This contrasts with most research in this area, which focuses on separating the timescales $N^0 = 1$ and $N$. Indeed, both diffusion \citep{eth:nag:1980, eth:nag:1988} and coalescent \citep{moe:1998:AAP30:493, wak:2008} limits of this latter regime have been studied in detail. It is also the setting of the ``loose linkage'' limit of \citet{eth:nag:1989}. Our usage of ``loose linkage'' therefore refers to a scaling intermediate between the usual Wright-Fisher diffusion and that of \citet{eth:nag:1989}. That the pioneering approach of \citet{nor:1975:SIAM} to investigate recombination does not seem to have been considered until now supports the observation that his work is ``somewhat neglected'' \citep{wak:2005}. It would also be of interest to find a coalescent-based analogue of these results %\citet{nor:1975:SIAM}
along the lines of \citet{moe:1998:AAP30:493}, or even a duality relationship in the manner of \citet{eth:gri:2009}.

For simplicity we have focused on a two-locus, finite-alleles, neutral model. Most of this article does not hinge heavily on these assumptions, and it should be relatively straightforward to extend our results to incorporate things like %more than two loci,
natural selection and more sophisticated models of mutation. 

\section*{Acknowledgments}
%This work was supported in part by EPSRC Grant EP/L018497/1 (P.A.J.), NIH Grant R01-GM094402 (P.A.J., Y.S.S), and a Packard Fellowship for Science and Engineering (Y.S.S.).
We gratefully acknowledge the support of the Isaac Newton Institute. Part of this work stemmed from discussions P.F.\ and Y.S.S.\ had during the 2010 program on ``Statistical Challenges Arising from Genome Resequencing.'' We also thank the generous support of the Simons Institute for the Theory of Computing. This work was completed while P.A.J.\ and Y.S.S.\ were participating in the 2014 program on ``Evolutionary Biology and the Theory of Computing.''

\bibliographystyle{imsart-nameyear}
\bibliography{master}

\end{document}